\setlist{nolistsep}
\newcommand{\abs}[1]{\lvert#1\rvert}
\newcommand*{\R}{\mathbb{R}}
\newcommand*{\Z}{\mathbb{Z}}
\newcommand*{\K}{\mathcal{K}}
\newcommand*{\N}{\mathbb{N}}
\newcommand{\BM}{\text{BM}}
\newcommand{\Leb}{\text{Leb}}
\newcommand{\bfPi}{\boldsymbol\Pi}
\newcommand{\bfeta}{\boldsymbol\eta}
\newcommand{\bfs}{\boldsymbol\s}
\newcommand{\bfzeta}{\boldsymbol\zeta}
\newcommand{\bfxi}{\boldsymbol{\xi}}
\newcommand{\bfx}{\boldsymbol x}
\newcommand{\bfy}{\boldsymbol y}
\newcommand{\bfo}{\boldsymbol o}
\renewcommand*{\d}{\mathrm{d}}
\newcommand*{\e}{\mathrm{e}}
\renewcommand*{\Xi}{\varXi}
\renewcommand*{\epsilon}{\varepsilon}
\renewcommand*{\theta}{\vartheta}
\renewcommand*{\Theta}{\varTheta}
\renewcommand*{\L}{\varLambda}
\renewcommand*{\Delta}{\varDelta}
\newcommand*{\1}{\mathds{1}}
\newcommand{\ssup}[1] {{{\scriptscriptstyle{({#1}})}}} 
\newcommand{\tsup}[1] {{{\scriptscriptstyle{[{#1}]}}}} 
\renewcommand{\o}{\omega}
\renewcommand{\O}{\Omega}
\newcommand{\qc}{q_{\rm c}}
\newcommand{\s}{\sigma}
\newcommand{\g}{\gamma}
\newcommand{\eps}{\varepsilon}
\newcommand{\F}{\mathcal F}
\newcommand{\one}{\mathds 1}
\newcommand*{\Cov}{\mathrm{Cov}}
\renewcommand{\ssup}[1] {{{\scriptscriptstyle{({#1}})}}}
\newtheorem{theorem}{theorem}[section]
\newtheorem{lemma}[theorem]{Lemma}
\newtheorem{proposition}[theorem]{Proposition}
\newtheorem{thm}[theorem]{Theorem}
\newtheorem{definition}[theorem]{\bf Definition}
\theoremstyle{remark}
\newtheorem{remark}[theorem]{Remark}
\title[Locality properties for Widom--Rowlinson models in random environments]{Locality properties for discrete and continuum Widom--Rowlinson models in random environments}
\author{Benedikt Jahnel$^1$}
\address{$^1$Technische Universit\"at Braunschweig and Weierstrass Institute Berlin, {\tt benedikt.jahnel@tu-braunschweig.de}}
\author{Christof K\"ulske$^2$}
\address{$^2$Ruhr University Bochum, {\tt christof.kuelske@ruhr-uni-bochum.de}}
\author{Alexander Zass$^3$}
\address{$^3$Weierstrass Institute Berlin, {\tt zass@wias-berlin.de}}
\begin{document}

\begin{abstract}
We consider the Widom--Rowlinson model in which hard balls of two possible colors are constrained to a hard-core repulsion between particles of different colors, in quenched random environments. These random environments model spatially dependent preferences for the attachment of balls. We investigate the possibility to represent the joint process of environment and infinite-volume Widom--Rowlinson measure in terms of continuous
(quasilocal) Papangelou intensities. We show that this is not always possible: In the case of the symmetric Widom-Rowlinson model on a non-percolating environment, we can explicitly construct a discontinuity coming from the environment.
This is a new phenomenon for systems of continuous particles, but it can be understood as a continuous-space echo of a simpler non-locality phenomenon known to appear for the diluted Ising model (Griffiths singularity random field~\cite{grising_2000}) on the lattice, as we explain in the course of the proof.
\medbreak\noindent
\emph{\keywordsname:} Gibbs measures; quasilocality; point processes; Widom--Rowlinson models; Papangelou intensities; disordered systems.\\
\emph{MSC2020:} Primary 60K35; Secondary 60G55, 60G60, 82B20, 82B21.
\end{abstract}

\maketitle

\section{Introduction}\label{sec_introduction}
In this article we consider the Widom--Rowlinson model (WRM) in a random environment, from the discrete lattice to the continuum. Physically the model describes a quenched random substrate on which particles of two types 
can attach, subject to a repulsive interaction between particles of different types. The attachment process is described in terms of a WRM in each of the connected components of the substrate. In the classical WRM, first introduced in~\cite{WR70},
the substrate (or base-space) is the Euclidean space $\R^d$; in this case the particle positions are drawn from two independent homogeneous Poisson point processes (PPP) of intensity $\lambda_+$ for the plus-particles, and  $\lambda_-$ for the minus-particles, under the constraint that particles of different types cannot be closer than a fixed radius. Recall that, as a fundamental property, the symmetric model with large and equal particle intensities $\lambda:=\lambda_+=\lambda_-$ shows a phase transition: it has two translation-invariant infinite-volume Gibbs measures $\mu^+$ and $\mu^-$ which show symmetry breaking in the particle densities for plus-particles and minus-particles, see~\cite{ruelle1971existence,bricmont1984structure,chayes1995analysis,dereudre2019phase}.  
More generally the WRM, and its high-density phase transition, has also been studied for spatially-homogeneous \emph{discrete base spaces}, including the integer lattice~\cite{higuchi_2004} or the Cayley tree, as well as in versions where the strict hardcore interaction is replaced by a mollified interaction, see~\cite{kissel2019hard,kulske2019gibbs}. For dynamical questions of metastability, see~\cite{den2019widom}. 

In the present work on the WRM in \emph{quenched environments}, we are after structural locality properties of the \emph{joint measures} that describe the environment process together with the process of attached WR particles. The environment processes in the present work are given by percolation models driven by spatially independent processes, such as the Poisson--Boolean model (PBM) in the continuum, the Poisson--Gilbert graph (PGG), or the clusters of iid Bernoulli site percolation (BSP) on the lattice. 

We are ultimately interested in exhibiting a discontinuity in the fully-continuous model and explore this question also in the two more discrete cases, which are interesting models in and of themselves. Indeed, not only does their study provide insight into the techniques and mechanisms needed for the case of the PBM, but they present interesting different features, see Table~\ref{tab:results}. Let us comment on these models individually. 

\medskip
{\bf Poisson--Boolean environments.} In the fully-continuous model the substrate is modeled by a PBM. By definition 
this model describes the distribution of a random subset of Euclidean space $A\subset \R^d$, obtained as the union of balls of independent random radii drawn from a distribution $\nu$, whose midpoints are chosen according to a driving PPP in Euclidean space with homogeneous intensity $\beta>0$. 
For precise definitions, see Section~\ref{sec:PBM}, Formula~\eqref{eq:PBM:joint} in our paper; for percolation properties of this model, see~\cite{meester1996continuum}. Conditional on a fixed realization of this environment, the positions of WR particles and their marks (plus or minus) are then distributed  according to the WRM on $A$, with symmetric intensity $\lambda$. This model is the most complex we consider in this work, 
and so it is worthwhile to explain setup, questions, and mechanisms in the limiting case of a substrate of zero radii. The meaningful limiting formulation of our problem for the PBM is the following. 

\medskip
{\bf Poisson--Gilbert graphs.} The environment is completely described by Poisson points in Euclidean space of a given intensity $\beta$. Each of these environment points serves as a potential docking site for a plus- or minus-WR particle, but it may also be left empty. This happens with site-wise independent a-priori probabilities $p_+,p_-,p_0$, subject to the WR hardcore condition that inter-particle distances must be greater or equal than $1$ between particles of different signs. 
We note that the resulting joint process is then a dependently {\em marked point process}. 
Its configuration space is the space of marked point clouds $(\eta, \sigma_\eta)$ 
where $\eta\subset \R^d$ is a locally finite point cloud, and the marks $\sigma_y$ take values in the mark space $\{-1,0,1\}$, where $0$ stands for a vacancy.  
In our treatment of the joint measures of the WRM on the full Poisson--Boolean environment we adopt a similar view as marked point process, where necessarily 
our spaces will then carry more information, see~\eqref{eq:PGG:joint}. 
Our analysis will start from the following further discretization of the model.

\medskip
{\bf Bernoulli site percolation.} In this setting the environment is described by a subcritical Bernoulli($q$) lattice field on $\Z^d$, i.e., each site $i\in\Z^d$ is either occupied ($v$, with probability $q$) or unoccupied ($u$, with probability $1-q$), independently of the others. As in the PGG setting, each occupied environment point is a potential docking site for a WR particle plus, minus, or zero. The joint measure of the WRM on this dilute lattice can then be seen as a finite-alphabet model with alphabet given by $\{u,-1,0,+1\}$.

\medskip
{\bf Papangelou intensities and specification kernels.} 
PPPs and marked point processes are most conveniently described in stochastic analysis in terms of {\em Papangelou intensities} $\rho({\bfx, \bfeta})$, see~\cite{dereudre2019introduction}, by means of the GNZ equation, see~\eqref{def:Pap}. In a dynamical interpretation, the GNZ equation 
describes the invariance of an infinite-volume measure under the dynamics which adds a marked particle $\bfx$ with rate $\rho({\bfx, \bfeta})$ to a given marked point cloud $\bfeta$ sampled from the measure, and kills particles with rate $1$, see~\cite{georgii2005conditional}. For readers familiar with infinite-volume equilibrium statistical mechanics, the GNZ equation is a spatially infinitesimal version of the DLR equation for models with discrete base spaces which are described via specifications. The Papangelou intensity then serves as an analogue to the single-site specification kernel $\gamma_{\bfx}$. For models in continuous space, specifications, i.e., families of kernels indexed by general bounded subvolumes 
in $\R^d$, are still very relevant. For this reason, it is useful to note that there is a simple relation between specifications and the Papangelou intensity given by formula~\eqref{Pap-DLR}. It indicates that all specification kernels (i.e., conditional probabilities for the process in bounded volumes $\Lambda$ of Euclidean space, given the realization of the process outside of $\Lambda$) can be derived from the simpler Papangelou intensity, by adding points iteratively. As Papangelou intensities refer to a single points $\bfx$ instead of nontrivial volumes $\Lambda$, they are very convenient for computations and analysis, and we prefer to focus on them in the present work. 

\medskip
{\bf Do our models have continuous Papangelou intensities?} 
In this paper we investigate the possibility to represent the joint process of environment and infinite-volume WRMs in terms of continuous (quasilocal) Papangelou intensities. This question is driven by similar investigations on the quasilocal Gibbsianness of a variety of models, mostly in the context of discrete base space models, see for example~\cite{bricmont1998renormalization,van2002possible,KNR04}. This is relevant also because non-locality of rates makes it impossible to define infinite-volume dynamics using standard techniques~\cite{liggett1985interacting}. For studies which consider transformations 
of the WRMs under time-evolutions, see~\cite{JK17} in the continuum, or~\cite{kissel2018dynamical,bergmann2023dynamical} in discrete setups. For a dynamic version of the WRM as a birth and death process and associated out-of-equilibrium questions, see~\cite{den2019widom} and upcoming works by the same authors.

\medskip
{\bf Main results.} Let us explain our result with respect to the fully-continuous model, where WR points are bound to lie on the area covered by the PBM describing the environment. We prove the following negative result: {\em The symmetric high-density WRM on a subcritical PBM does {\bf not} admit a continuous Papangelou intensity.}

This non-representability statement of joint measures in terms of good Papangelou intensities is prototypical for the other cases we treat, albeit modulo some notable changes, concerning the local and non-local nature of possible discontinuities, see Definition~\ref{def:PBM-locrob}.
In Table~\ref{tab:results} we present an overview of the results.
\rowcolors{2}{gray!25}{white}
\begin{table}
    \begin{tabular}{c|c|c|c}
    \rowcolor{gray!50}
        \multicolumn{1}{c|}{}& Section \ref{sec:DL} & Section \ref{sec:PGG} & Section \ref{sec:PBM}\\
        Environment & BSP($q$) & PGG($\beta$) in $\R^d$ & PBM($\beta$) w/random radii\\
        Small WR intensity & ql-Gibbs & non-ql-Gibbs & ql-Gibbs if bounded radii \\
        Large WR intensity & non-ql-Gibbs & non-ql-Gibbs & non-ql-Gibbs\\
    \end{tabular}
    \caption{Overview of the results on existence and absence of quasilocal (ql) Papangelou intensities for the joint measure of the WRM in three environments. The environments are given by Bernoulli site percolation (BSP), Poisson--Gilbert graphs (PGG), and Poisson--Boolean models (PBM) in the subcritical percolation regimes with respect to their corresponding parameters $q\in[0,1]$ and $\beta>0$.}
    \label{tab:results}
\end{table}
Let us highlight that the most notable qualitative difference between the three models is that for the environments given by the BSP and the PBM, we are able to show that the models undergo a \emph{phase transition} depending on the WR intensity, in the sense that, for small intensities, we can exhibit a continuous Papangelou intensity, while for large intensities this is not possible. Indeed, in the PGG, a different behavior emerges, as non-quasilocality holds for all intensities. This is a true continuum phenomenon created by the possible accumulation of environment points together with the fact that the environment is non-spatial from the point of view of the WRM, see Step~2 of the proof of Theorem~\ref{thm_2}.

Let us further note that in the PBM setting, natural non-equivalent families of Papangelou intensities exist, based on different a-priori measures and the necessarily different association rules for the WR particles, see~\eqref{eq:PBM:pap} and~\eqref{eq:PBM:pap3} and the respective Figures~\ref{fig:PBM:pap} and~\ref{fig:PBM:pap2}.
In any case, the discontinuity of the Papangelou intensity we encounter is due to intrinsic non-local effects in the joint measure, which occur when we are connecting or disconnecting large but finite clusters of the substrate, at large distances. 
These effects are persistent, and can not be remedied by redefinition of topologies on point clouds, see Definition~\ref{def:PBM-locrob}. 
Indeed, even though in the subcritical regime very large clusters are clearly very improbable, their effect cannot be neglected as it leads to a non-removable discontinuity. Moreover, we stress that this phase transition is not simply due to some unbounded nature of the interaction: it is still present even if the radii of the environment are bounded, i.e, when the interaction is completely finite range. This is a phenomenon reminiscent of Griffiths singularities~\cite{griffiths1969nonanalytic}, and is related to a (simpler) non-locality phenomenon that has been proved to occur for the Ising model on site percolation clusters on the lattice, see~\cite{grising_2000}. 
Our actual proof follows the idea that connecting large clusters far away creates discontinuity, 
which is however already in the lattice situation much more involved than for an Ising model, 
and relies on exploiting stochastic domination and percolation arguments for WRMs. 

To summarize, our results show that randomness of the environment induces non-local behavior in the joint measures, in ways which depend on the type of environment distribution.  Non-localities at large WR intensities occur for all types of environments. Broadly speaking and ignoring any technical challenges of continuous particle systems, this might have been a bold guess from the beginning for a reader knowing the Griffiths field. A bigger surprise is the special role of the PGG compared to the other two types of environments, as it displays discontinuities also in the region of small WR intensities. Our analysis reveals that this can be understood by the relevance of the network structure a PGG realization induces, and the information transportation over long distances in the low-density WR model on high-density networks.

\medskip
The manuscript is organized as follows. It proceeds from simpler environment processes to environments with higher complexity. In Section~\ref{sec:DL} we investigate the quenched WRM on the clusters of subcritical BSP and prove absence of quasilocal specifications due to an essential discontinuity point along connected halfspaces. In Section~\ref{sec:PGG} we consider the discrete WRM on the subcritical PGG and prove absence of quasilocality on the level of graph Papangelou intensities. Finally, in Section~\ref{sec:PBM} we show the absence of quasilocal Papangelou intensities for the quenched continuum WRM on the PBM. All proofs are presented in Section~\ref{sec_proofs}. Finally in the appendix in Section~\ref{sec:ap} we elaborate on an alternative approach towards the canonical Papangelou intensity of the continuum WRM on the PBM. 

\section{The WRM on the dilute lattice}\label{sec:DL}
Consider the \emph{Bernoulli lattice field} $\pi(\d\eta)$ on $\Z^d$, $d\ge 2$, with density $q\in(0,1)$, i.e., $\eta_i\in\{u,v\}$ where $\eta_i=v$ represents that the site $i\in \Z^d$ is occupied, $\eta_i=u$ represents that the site $i\in \Z^d$ is unoccupied and occupation at $i\in\Z^d$ happens independently with probability $q$.  
It will be convenient to occasionally identify $\eta\in \{u,v\}^{\Z^d}$ with the set of occupied sites, i.e., consider $\eta\subset\Z^d$. It is a well-known fact that there exists a critical threshold $0<\qc<1$ such that for all $q<\qc$ the field $\pi$ is in the subcritical percolation regime, where almost surely no infinite connected component of occupied sites exists~\cite{grimmett1999percolation}. In what follows we will always assume that $q<\qc$. 
We are interested in the joint measure
\begin{equation}\label{eq:DL:joint}
	K(\d\eta,\d\sigma_\eta) = \pi(\d\eta)\mu_\eta(\d\s_\eta),
\end{equation}
where $\mu_\eta$ is the countable product measure over the almost-surely finite components $(\eta^{\ssup{k}})_{k\in I}$ of $\eta$, i.e.,  
 $\mu_{\eta}(\d\sigma_{\eta})=\Pi_{k\in I}\mu_{\eta^{\ssup{k}}}(\d\sigma_{\eta^{\ssup{k}}})$. Then, for a finite component $\eta^{\ssup{k}}$, $\mu_{\eta^{\ssup{k}}}(\d\sigma)$ is the {\em discrete Widom--Rowlinson model} (WRM)~\cite{higuchi_2004}, i.e.,
\begin{equation}\label{eq:Disc_WRM}
	\mu_{\eta^{\ssup{k}}}(\sigma_{\eta^{\ssup{k}}}) = \frac{1}{Z_{\eta^{\ssup{k}}}}\ \prod_{i\in \eta^{\ssup{k}}}p_{\sigma_i}\1\{\s_{\eta^{\ssup{k}}} \text{ is feasible}\},\qquad \s_{\eta^{\ssup{k}}}\in \{-1,0,1\}^{\eta^{\ssup{k}}},
\end{equation}
where $p_0,p_{+},p_{-}\in(0,1)$ with $p_0+p_+ + p_-=1$ are the spin probabilities and feasibility
\begin{equation*}
    \1\{\s_{\eta^{\ssup{k}}} \text{ is feasible}\}=\prod_{i, j\in \eta^{\ssup{k}}\colon \abs{i-j}=1}\1\{\sigma_i\sigma_j\neq -1\}
\end{equation*}
represents the constraint that no neighboring spins face in opposite directions. $Z_{\eta^{\ssup{k}}}$ is the usual normalizing constant. In words, we consider the WRM with open boundary conditions on the finite clusters of a subcritical Bernoulli site percolation model and the measure $K$ can then be seen as a probability measure on $\O:=\{u,-1,0,1\}^{\Z^d}$ as illustrated in Figure~\ref{fig:DL:clusters}. In particular, 
$K(\d \bfeta)$ is a finite-alphabet model with alphabet $\{u,-1,0,1\}$ and a configuration $\O\ni \bfeta=(\eta,\s_\eta)$ is completely determined by its {\em gray configuration} $\eta$ (the set of occupied sites) together with its {\em coloring} $\s_\eta$.

Note that, in order for $K$ to be well-defined, we need to ensure measurability of  $\mu_\eta$ with respect to $\eta$. For this, note that we can define $\mu_\eta$ 
also in a super-critical regime for example as the extremal measure $\mu_\eta^+$ (with plus-boundary condition), whose existence follows by FKG-type arguments for monotone finite-volume limits on increasing observables. Furthermore, $\mu_\eta$ can be uniquely identified as a measure on $\O$ (without reference to $\eta$) via $\mu_\eta\otimes \delta_{u_{\eta^c}}$, where $u_{\eta^c}$ is the all-$u$ configuration on $\Z^d\setminus \eta$. Then, the measurablity of $\eta\mapsto\mu_\eta$ can be checked on local configurations $w_\L:=\{w'\in \O\colon w'_{\L}=w_{\L}\}$ and follows from the fact that $\mu_\eta(w_\L)$ is the limit of local (and therefore measurable) functions. 
\begin{figure}
\begin{minipage}{.6\textwidth}
	\begin{center}
		\includegraphics[width=.8\textwidth]{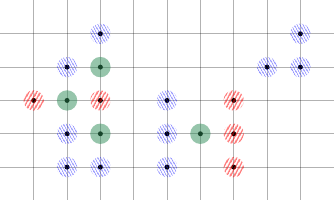}
	\end{center}	
\end{minipage}%
\begin{minipage}{.4\textwidth}
	\caption{Configuration of the WRM on the diluted lattice. Black vertices represent occupation in the underlying Bernoulli site percolation. Blue (resp.~green, red) disks represent the $+$ (resp.~$0$, $-$) spins in the WRM.}\label{fig:DL:clusters}
\end{minipage}
\end{figure}

As first noted in~\cite{grising_2000}, for the case of the Ising model defined on the clusters of a Bernoulli lattice field, the model $K$ seems very local at a first glance. However, this is not the case, when locality is defined in the following sense. 

Consider a {\em specification} $\g=(\g_\L)_{\L\Subset\Z^d}$ indexed by finite subsets $\L\Subset\Z^d$, i.e., a consistent and proper family of probability kernels $\g_\L(\bfeta_\L|\bfeta_{\L^{\rm c}})$ on $\O_\L:=\{u,-1,0,1\}^\L$ with boundary condition $\bfeta_{\L^{\rm c}}\in \O_{\L^c}$. Here $\L^{\rm c}=\Z^d\setminus\L$. We are interested in continuity properties of $\g$ with respect to the boundary condition based on the product topology on $\O$, where we equip $\O$ with the Borel-$\s$-algebra $\F$ associated to the product topology.  
\begin{definition}[Quasilocal specifications]
	Let $\gamma$ be a specification on $\O$. A configuration $\bfeta\in\O$ is called \emph{good} for $\g$ if, for any $\L\Subset\Z^d$ and any local function $f\colon \O\to\R$,
	\begin{equation*}
		\sup_{\bfeta'\colon \bfeta'_\Delta=\bfeta_\Delta}\abs{\g_{\L}(f\vert\bfeta'_{\L^c}) - \g_{\L}(f\vert\bfeta_{\L^c})}\to 0\qquad \text{ as } \Z^d\Supset\Delta\uparrow\Z^d. 
	\end{equation*}
	We denote by $\O(\g)$ the set of good configurations for $\g$, while elements of $\O\setminus\O(\g)$ are called \emph{bad} for $\g$. We say that $\g$ is \emph{quasilocal} if $\O(\g)=\O$.
\end{definition}
Let us note that quasilocality of specifications is in one-to-one correspondence to the existence of summable potentials such that the specification can be written as a Boltzmann weight with respect to this specification~\cite{kozlov1974gibbs,sullivan1973potentials}.

Based on a notion of quasilocal specifications we can now characterize quasilocal Gibbsianness. 
\begin{definition}[Quasilocal Gibbsianness]
	A probability measure $P$ on $\O$ is said to be \emph{quasilocally Gibbs}, if $P$ satisfies the DLR equations with respect to some quasilocal specification $\g$, i.e.,
	\begin{equation*}
		P(A)=\int P(\d \bfeta) \g_\L(A\vert\bfeta_{\L^c})\qquad \text{ for all $\L\Subset\Z^d$ and $A\in \F$}.
	\end{equation*}
	Otherwise $P$ is called \emph{non-quasilocally Gibbs}. 
\end{definition}
Let us note that in~\cite{sokal1981existence} it is shown that, under general conditions, for every random field there exists a specification satisfying the DLR equation. However, the specification is not necessarily quasilocal.  
Using these definitions, we can now formulate our first main result on the presence and absence of quasilocal Gibbsianess. 
\begin{thm}\label{thm_1}
Let $q<\qc$ and consider the symmetric WRM, where $p_+=p_-$ is sufficiently close to $1/2$. Then, the measure $K$ is non-quasilocally Gibbs. On the other hand, for $q<\qc$ and all sufficiently small $p_+,p_-$, $K$ is quasilocally Gibbs.
\end{thm}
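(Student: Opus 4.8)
The plan is to prove the two halves of Theorem~\ref{thm_1} by rather different mechanisms: the non-quasilocality in the symmetric high-density regime by constructing an explicit bad configuration, and the positive (quasilocal Gibbsian) statement in the low-density regime by a Dobrushin-type uniqueness / disagreement-percolation argument.

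\textbf{Non-quasilocality for $p_+=p_-\approx 1/2$.} The idea, following the Griffiths-singularity mechanism of~\cite{grising_2000}, is to exhibit a configuration $\bfeta\in\O$ that is bad for \emph{every} specification for which $K$ satisfies DLR. First I would fix a candidate bad configuration: take the gray configuration $\eta$ to be a long straight corridor of occupied sites joining the origin to ``infinity'' along a half-line (or better, two disjoint long occupied arms whose occupied/unoccupied status near a distant region is the variable we perturb), and set the coloring on part of this corridor to be all $+$ on one far end. One then studies the conditional distribution of the spin $\s_0$ at the origin, given the configuration on a large annulus, under $K$. The key computation is that in the symmetric WRM on a long one-dimensional-like cluster the influence of a far boundary spin on $\s_0$ does \emph{not} decay: conditionally on the cluster being present, the WRM on a path is essentially a one-dimensional model with a phase-transition-like long memory induced by the $0$-spins acting as domain walls — more precisely, the $\pm$ symmetry means that a $+$ deep inside a long occupied arm biases $\s_0$ towards $+$ by an amount bounded below uniformly in the arm length, as long as $p_+=p_-$ is close enough to $1/2$ that the entropic cost of a string of $+$'s beats the cost of inserting a separating $0$. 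Then I would run the standard ``bad configuration'' argument: by conditioning on whether a distant far-away box contains an all-$+$ occupied arm reaching close to the origin versus an empty (all-$u$) region, one produces two sequences of boundary conditions agreeing on arbitrarily large finite sets $\Delta$ but forcing $\g_{\{0\}}(\s_0=+\mid\cdot)$ to differ by a positive constant; since these conditioning events have positive $K$-probability (subcriticality only makes large clusters rare, not impossible), $K$-a.s.\ continuity fails, so no quasilocal specification can be DLR for $K$. The main obstacle here is the uniform-in-length lower bound on the long-range influence along an occupied arm: this needs a genuine (if elementary) estimate on the discrete symmetric WRM on a path or comb, e.g.\ via a transfer-matrix computation showing the relevant correlation does not vanish when $p_+=p_-$ is near $1/2$, together with care that the argument is robust to the random unoccupied sites branching off the arm.

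\textbf{Quasilocal Gibbsianness for small $p_+,p_-$.} Here the strategy is to write down an explicit quasilocal specification $\g=(\g_\L)$ for $K$ and verify its continuity. Because the underlying percolation $\pi$ is subcritical, a.s.\ every site lies in a finite cluster, and the conditional measure $\mu_\eta$ factorizes over clusters; the natural single-site kernel is $\g_{\{i\}}(\bfeta_i\mid\bfeta_{\{i\}^c})$, which depends only on the (finite) cluster of $i$ in the gray configuration determined by $\bfeta_{\{i\}^c}$ together with the spins on that cluster. The plan is: (i) show $K$ is indeed DLR for this ``cluster'' specification (a direct check using the product structure~\eqref{eq:DL:joint}); (ii) prove quasilocality by a disagreement-percolation / Dobrushin argument — when $p_0$ is close to $1$ (equivalently $p_+,p_-$ small), the effective single-site influence in the WRM is small, and combined with subcriticality of $\pi$ the composite ``environment $\oplus$ colouring'' model has a Dobrushin interaction matrix of norm $<1$, which yields exponential decay of boundary influence and hence quasilocality in the product topology. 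Concretely I would bound $\sup|\g_{\{i\}}(f\mid\bfeta')-\g_{\{i\}}(f\mid\bfeta)|$ for $\bfeta'_\Delta=\bfeta_\Delta$ by the probability that the cluster of $i$ (in a configuration consistent with the fixed data on $\Delta$) reaches $\Delta^c$, which tends to $0$ as $\Delta\uparrow\Z^d$ by subcriticality; the smallness of $p_+,p_-$ is what guarantees that even when the cluster is large the colouring doesn't reintroduce long-range dependence beyond the cluster itself, so the estimate closes. The main obstacle on this side is organizational rather than deep: one must handle the fact that the ``good'' gray configurations for $\g_{\{i\}}$ are those in which $i$'s cluster is finite, check that this holds $K$-a.s.\ and moreover that the continuity estimate is uniform over the relevant boundary conditions, and make the disagreement-percolation coupling precise for the $\{u,-1,0,1\}$-alphabet model. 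I would present both halves in full detail in Section~\ref{sec_proofs}.
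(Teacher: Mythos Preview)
Your non-quasilocality argument has a genuine gap: the ``long one-dimensional corridor'' configuration cannot produce the uniform-in-length long-range influence you need. The symmetric WRM on a path is a finite-state nearest-neighbour one-dimensional Gibbs measure; for any $p_0=1-2p>0$ the associated transfer matrix is primitive, so by Perron--Frobenius its second eigenvalue is strictly smaller than the first and correlations along the path decay exponentially. Thus for every fixed $p<1/2$ the influence of a far boundary spin on $\s_0$ along a 1D arm tends to zero with the arm length, and your claimed ``phase-transition-like long memory'' simply does not exist in one dimension. (The correlation length does diverge as $p\uparrow 1/2$, but the theorem requires a fixed $p$ strictly below $1/2$, and for such $p$ no uniform lower bound survives.)

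The paper's mechanism is genuinely higher-dimensional: the bad gray configuration consists of two truncated half-spaces $\L^n_\pm\subset\Z^d$ touching only at the origin, and the perturbation adds or removes a \emph{second} connecting vertex $z_n=2ne_2$ far away. The relevant quantity is then the covariance, under the product WRM $\mu_{\L^n}=\mu_{\L^n_-}\otimes\mu_{\L^n_+}$, of the local functions $\phi(\s_{x_o},\s_{y_o})$ and $\phi(\s_{x_n},\s_{y_n})$ at the two bottlenecks. This covariance is bounded below using the random-cluster representation of the discrete WRM \cite{higuchi_2004} together with Holley's inequality: for $p$ close to $1/2$ the random-cluster measure dominates a supercritical Bernoulli field, so $x_o$ and $x_n$ are connected with probability close to one inside each $d$-dimensional half-space, which forces $\s_{x_o}$ and $\s_{x_n}$ to be strongly correlated uniformly in $n$. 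It is precisely the $d\ge 2$ phase transition of the WRM that carries the argument; a 1D arm cannot substitute for it.

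For the positive part your plan is close to the paper's, but your proposed bound---controlling $|\g_{\{i\}}(f\mid\bfeta')-\g_{\{i\}}(f\mid\bfeta)|$ by the probability that the cluster of $i$ reaches $\Delta^c$---would at best yield $K$-almost-sure continuity, not quasilocality at \emph{every} $\bfeta\in\O$ (the half-space configurations above have an infinite cluster at the origin). The paper instead writes $Z_{\eta_{\tsup{o}}\cup\{o\}}/Z_{\eta_{\tsup{o}}}=\mu_{\eta_{\tsup{o}}}(h(\bfeta_{\partial o}))$ for a local observable $h$, and uses Dobrushin uniqueness of the WRM at small $p_\pm$ to get continuity of $\eta\mapsto\mu_{\eta_{\tsup{o}}}(h)$ in the domain, uniformly over all (possibly infinite) clusters. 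You should rework your argument along these lines rather than relying on subcriticality of $\pi$.
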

As a point of reference, for the following sections, let us mention that the proof of Theorem~\ref{thm_1}, which is presented together with all other proofs in Section~\ref{sec_proofs}, is based on the canonical version of the specification given by $K(\bfeta_\L|\bfeta_{\L^{\rm c}})$. More precisely, leveraging factorization and translation invariance, rather than $K(\bfeta_\L|\bfeta_{\L^{\rm c}})$, it suffices for us to work with the following quantity,  
\begin{equation}
\label{eq:Disc_Pap}
\begin{split}
\rho_o(\bfeta_o,\bfeta_{o^{\rm c}}):=\frac{K(\bfeta_o|\bfeta_{o^{\rm c}})}{K(u_o|\bfeta_{o^{\rm c}})}
=\frac{q}{1-q}\frac{Z_{\eta_{\tsup{o}}}}{Z_{\eta_{\tsup{o}}\cup\{o\}}}\one\{\s_{{\rm n}(o,\eta)\cup\{o\}}\text{ is feasible}\}\one\{\eta_{\tsup{o}}\text{ is finite}\}
\end{split}
\end{equation}
where $\bfeta_o\in \{-1,0,1\}$, ${\rm n}(x,\eta)\subset\eta$ denotes the set of vertices in $\eta$ that are neighbors of $x$, and $\eta_{\tsup{x}}\cup\{x\}$ denotes the cluster of $\eta\cup\{x\}$ at $x$, where $\eta_{\tsup{x}}=(\eta_{\tsup{x}}\cup\{x\})\backslash\{x\}$ is the finite union of connected components of $\eta$ which are merged by adding $x$. As will become clear later, the quantity~\eqref{eq:Disc_Pap} can be seen as a {\em discrete Papangelou intensity}.

Let us also note that the last statement about quasilocal Gibbsianness in Theorem~\ref{thm_1} in fact holds for all $q\in [0,1]$. However, in order to make this precise we would have to define WRMs on infinite clusters, as explained above.

\section{The WRM on the Poisson--Gilbert graph}\label{sec:PGG}
In this section we investigate an intermediate model in which the diluted lattice is replaced by a subcritical Poisson--Gilbert graph (PGG). That is, we consider a homogeneous Poisson point process (PPP) $\Pi(\d \eta)$ in $\R^d$ with intensity $\beta>0$ and use the associated Gilbert graph $g(\eta)$ in which every pair of points $X_i,X_j\in \eta$ is connected by an undirected edge if and only if $|X_i-X_j|<1$. In order to introduce some notation, recall that $\Pi$  is a probability measure on the set $\mathcal X:=\{\eta\subset \R^d\colon |\eta_\L|<\infty\text{ for all bounded }\L\subset\R^d\}$ of locally finite subsets of $\R^d$, which features total spatial independence. Here $\eta_\Lambda$ denotes the restriction of $\eta$ to the volume $\Lambda\subset\R^d$. It is a well-known fact from continuum percolation theory that there exists a critical intensity $0<\beta_{\rm c}<\infty$ such that for $\beta<\beta_{\rm c}$ the PGG is subcritical in the sense that with probability one, $\Pi$ features only finite connected components in $g(\eta)$. $\beta$ hence plays the role of $q$ from Section~\ref{sec:DL} and we assume the Gilbert graph to be subcritical. 
We are interested in the joint measure
\begin{equation}\label{eq:PGG:joint}
	\K(\d\eta,\d\sigma) = \Pi(\d\eta)\mu_\eta(\d\s_\eta),
\end{equation}
where, for 
$\Pi$-a.e.~pointcloud $\eta$, the PGG decomposes into countably many  connected components with vertex sets $(\eta^{\ssup{k}})_{k\in I}$, and $\mu_{\eta}$ is the countable product measure over these components, $\mu_{\eta}(\d\sigma_{\eta})=\Pi_{k\in I}\mu_{\eta^{\ssup{k}}}(\d\sigma_{\eta^{\ssup{k}}})$, where for a finite component $\eta^{\ssup{k}}$ we define $\mu_{\eta^{\ssup{k}}}$ as in~\eqref{eq:Disc_WRM}
as the discrete WRM. The only difference is that feasibility is now defined as
\begin{equation*}
    \1\{\s_{\eta^\ssup{k}} \text{ is feasible}\}=\prod_{i, j\in \eta^{\ssup{k}}\colon i\sim_{g(\eta^{\ssup{k}})}j}\1\{\sigma_i\sigma_j\neq -1\},
\end{equation*}
where the constraint is that no two spins of neighboring sites $i\sim_{g(\eta^{\ssup{k}})}j$ in $g(\eta^{\ssup{k}})$ face in opposite directions. Again, $Z_{\eta^{\ssup{k}}}$ is the usual normalizing constant. In words, we consider the WRM with open boundary conditions on the finite clusters of a subcritical PGG and the measure $\K$ can then be seen as a (dependently) marked point process in $\R^d$ with mark space $\mathcal M=\{-1,0,1\}$ as illustrated in Figure~\ref{fig:PGG:clusters}. In other words, random elements $\bfeta=(\eta,\s_\eta)$ distributed according to $\K$ are locally finite marked configurations in $\R^d\times \mathcal M$, i.e., $\bfeta\in \O=\{\bfxi\subset \R^d\times \mathcal M\colon |\xi_\L|<\infty\text{ for all bounded }\L\subset\R^d\}$. In order to highlight the structural similarities in our three models, we use the same notation $\mathcal M$ and $\O$ for the mark space and the configuration space, respectively. What $\mathcal M$ and $\O$ represent will subsequently depend on the model considered in the corresponding sections.
Let us note that $\K$ is not a Potts model in the sense of~\cite{georgii1996phase} since there is in general no symmetry with respect to spin change. 
\begin{figure}
\begin{minipage}{.6\textwidth}
	\begin{center}
	   \includegraphics[width=.8\textwidth]{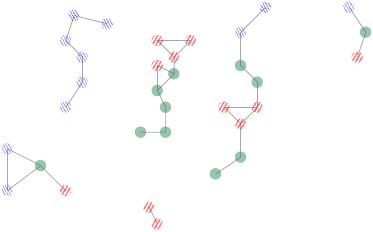}
	\end{center}	
\end{minipage}%
\begin{minipage}{.4\textwidth}
	\caption{A realization of the discrete WRM on clusters of the Poisson--Gilbert graph, represented by gray edges. In blue (resp.~green, red) the $+1$ (resp.~$0, -1$) the spins of the WRM.}
	\label{fig:PGG:clusters}
\end{minipage}
\end{figure}

Now, we reproduce the non-locality result in the more advanced continuum setting. We do this in the framework of Papangelou intensities~\cite{georgii2005conditional} as also hinted towards in the Step~7 of the proof of Theorem~\ref{thm_1} below. That is, we consider non-negative measurable functions $\rho\colon (\R^d\times \mathcal M)\times \O\to [0,\infty)$ that will serve as analogues for the specifications above and call them {\em Papangelou intensities}. 
We write $\int\d\bfx f(\bfx)=\int\d x\sum_{\s_x\in \mathcal M}f((x,\s_x))=\int\d x\sum_{\s_x\in \mathcal M}f(\bfx)$, where $\d x$ denotes the Lebesgue measure. 
\begin{definition}[GNZ equations]
    A marked point process $P$ on $\O$ {\em satisfies the GNZ equations with respect to some Papangelou intensity $\rho$} if 
    \begin{equation}\label{def:Pap}
        \int P(\d \bfeta) \sum_{x\in \eta}f(\bfx,\bfeta)=\int\d \bfx\int P(\d \bfeta)f(\bfx,\bfeta\cup\{\bfx\}) \rho(\bfx,\bfeta)
    \end{equation}
    for all measurable $f\colon (\R^d\times \mathcal M)\times\O\to [0,\infty)$. 
\end{definition}
The characterizations of Gibbs point processes with respect to Papangelou intensities is in one-to-one correspondence to the characterizations with respect to local specifications~\cite{dereudre2019introduction,flint2019functional}. More specifically, as described for example in~\cite{georgii2005conditional}, the Papangelou intensity is in one-to-one correspondence to local specifications via
\begin{align}\label{Pap-DLR}
    \g_\L(\d(\bfeta_{\ssup 1}\cdots \bfeta_{\ssup n})|\bfeta_{\L^c})=\Pi_\L(\d(\bfeta_{\ssup 1}\cdots \bfeta_{\ssup n}))
    \rho(\bfeta_{\ssup 1},\bfeta_{\L^c})\prod_{i=2}^n\rho(\bfeta_{\ssup i},\bfeta_{\ssup 1}\cdots \bfeta_{\ssup{i-1}}\bfeta_{\L^c}).
\end{align}
We are interested in locality properties of Papangelou intensities characterized as a continuity with respect to the $\tau$-topology. 
\begin{definition}[Quasilocal Papangelou intensities]
	Let $\rho$ be a Papangelou intensity. A configuration $\bfeta\in\O$ is called \emph{good} for $\rho$ if, for all  $\bfx=(x,\s_x)\in \R^d\times \mathcal M$,
	\begin{equation*}
		\sup_{\bfeta'\colon \bfeta'_\Delta=\bfeta_\Delta}\abs{\rho(\bfx,\bfeta') - \rho(\bfx,\bfeta)}\to 0\qquad \text{ as } \Delta\uparrow\R^d. 
	\end{equation*}
	We denote by $\O(\rho)$ the set of good configurations for $\rho$, while elements of $\O\setminus\O(\rho)$ are called \emph{bad} for $\rho$. We say that $\rho$ is \emph{quasilocal} if $\O(\rho)=\O$.
\end{definition}
Let us note that in~\cite{jahnel2021gibbsian} results are presented for the representability of quasilocal intensities via summable potentials. 
\begin{remark}[Topology] Let us further note that instead of the uniform $\tau$-topology, where for $\bfeta=(\eta,\s_\eta)$ we have that $\bfeta'\Rightarrow\bfeta$ if and only if $\sum_{x\in \eta'}f(\bfx)\to \sum_{x\in \eta}f(\bfx)$ for all compactly supported and bounded measurable functions $f\colon \R^d\times \mathcal M \to \R$, also the vague topology is a candidate for the definition (of absence) of convergence. In simple terms, the vague continuity would require the Papangelou intensity to be continuous with respect to perturbations for the point cloud even in bounded volumes. However, this continuity is already violated by many standard hard-core models such as the classical continuum WRM. In our case the continuity is required with respect to non-local perturbations and thus the uniform $\tau$-topology seems appropriate, see our remarks below Theorem~\ref{thm_2}.
\end{remark}

In the continuum it is necessary to distinguish quasilocality from continuity with respect to local spatial perturbations. In other words, we separate discontinuity with respect to variations of marked point clouds coming from long-range dependencies from the sensitivity to local fluctuations.
In order to accommodate this, we want to focus our attention towards a canonical class of Papangelou intensities that are graph-functions with respect to $\eta$. This means that $\rho(x^\tsup{1},\eta^\tsup{1})$ only depends on the (non-spatial) graph structure of $\eta$ and $\eta\cup\{x\}$, where $$\eta^\tsup{1}=(\eta,1_\eta)$$ 
is the all-one configuration on $\eta$. In order to make this precise, we say that $\eta'$ and $\eta$ have the {\em same graph structure} if there exists a bijection $\psi\colon \eta\to\eta'$ such that for all $x,y\in \eta$ we have that $|x-y|<1\Leftrightarrow|\psi(x)-\psi(y)|<1$. 
\begin{definition}[Graph functions]\label{def-graphpap}
	We call a mapping $\rho\colon (\R^d\times \mathcal M)\times \O\to [0,\infty)$ a {\em graph function} if 
	\begin{equation*}
		\rho(x^\tsup{1},\eta^\tsup{1})= \rho(x^\tsup{1},\eta'^{\tsup{1}})
	\end{equation*}
	whenever $\eta'$ and $\eta$, as well as  $\eta'\cup\{x\}$ and $\eta\cup\{x\}$, have the same graph structure with a bijection $\psi$ such that $\psi(x)=x$.
\end{definition}

As before, let $\eta_{\tsup{x}}\cup\{x\}$ denote the cluster of $\eta\cup\{x\}$ at $x$, where $\eta_{\tsup{x}}=(\eta_{\tsup{x}}\cup\{x\})\backslash\{x\}$ is the finite union of connected components of $\eta$ which are merged by adding $x$. Furthermore, similar as above, ${\rm n}(x,\eta)\subset\eta$ denotes the set of vertices in $\eta$ that are neighbors of $x$ in the PGG. We will always work in the setting where $\eta_{\tsup{x}}$ is finite almost-surely for any $x\in\R^d$.  

In words, the notion of a graph function allows us to consider the PGG induced by the point cloud instead of precise spatial positions. As the PGG, as a graph, is robust with respect to local perturbations, continuity questions are reduced to presence/absence of non-local spatial perturbations. As the latter are the main focus of the paper, we prefer to work with graph functions.
Let us first exhibit our canonical Papangelou intensity for $\K$. 
\begin{proposition}\label{prop_Pap_1}
$\K$ satisfies the GNZ equations with respect to the Papangelou intensity 
\begin{equation}\label{eq:PGG:pap}
\begin{split}
\rho(\bfx,\bfeta)
=\beta\frac{Z_{\eta_{\tsup{x}}}}{Z_{\eta_{\tsup{x}}\cup\{x\}}} 
\one\{\s_{{\rm n}(x,\eta)\cup\{x\}}\text{ is feasible}\}\one\{\eta_{\tsup{x}}\text{ is finite}\},
\end{split}
\end{equation}
which is also a graph function.
\end{proposition}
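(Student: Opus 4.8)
The plan is to verify the GNZ identity~\eqref{def:Pap} directly by exploiting the product structure of $\K$ over connected components, and then separately check the graph-function property. First I would fix a measurable test function $f\colon(\R^d\times\mathcal M)\times\O\to[0,\infty)$ and work on the left-hand side $\int\K(\d\bfeta)\sum_{x\in\eta}f(\bfx,\bfeta)$. Since $\K(\d\eta,\d\sigma)=\Pi(\d\eta)\mu_\eta(\d\sigma_\eta)$ and $\Pi$ is a Poisson point process, the natural tool is the Mecke equation for $\Pi$: the sum over $x\in\eta$ turns into an integral $\beta\int\d x$ against the Palm version, which for a PPP simply adds the point $x$ to $\eta$. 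The subtlety is that the inner spin-integral $\mu_\eta$ also depends on $\eta$, so I would first condition on $\eta$, apply Mecke to the pair $(\eta\mapsto\text{``}\sum_{x\in\eta}\int\mu_\eta(\d\sigma_\eta)f\text{''})$, obtaining $\beta\int\d x\int\Pi(\d\eta)\int\mu_{\eta\cup\{x\}}(\d\sigma_{\eta\cup\{x\}})f(\bfx,(\eta\cup\{x\},\sigma_{\eta\cup\{x\}}))$, where crucially the spin configuration is now sampled from $\mu_{\eta\cup\{x\}}$, not $\mu_\eta$.

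The heart of the matter is then a Radon--Nikodym / reweighting computation comparing $\mu_{\eta\cup\{x\}}$ to $\mu_\eta\otimes(\text{law of }\sigma_x)$. Adding the point $x$ to $\eta$ merges the finite clusters $\eta^{\ssup{k}}$ that are neighbors of $x$ into a single new cluster $\eta_{\tsup{x}}\cup\{x\}$, leaving all other components untouched; since $\mu_\eta$ is a product over components, only the affected factor changes. Writing out~\eqref{eq:Disc_WRM} for the merged cluster versus the product of the pre-merge clusters, the partition functions combine into the ratio $Z_{\eta_{\tsup{x}}}/Z_{\eta_{\tsup{x}}\cup\{x\}}$ (here $Z_{\eta_{\tsup{x}}}$ denotes the product of the $Z_{\eta^{\ssup{k}}}$ over the merged components, matching the notation), the a-priori weight $p_{\sigma_x}$ for the new spin appears, and the feasibility indicator of the enlarged configuration factorizes as $\one\{\sigma_{\eta_{\tsup{x}}}\text{ feasible}\}\cdot\one\{\sigma_{{\rm n}(x,\eta)\cup\{x\}}\text{ feasible}\}$, the first factor being already enforced by $\mu_\eta$. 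Summing over $\sigma_x\in\mathcal M$ against $\sum_{\sigma_x}p_{\sigma_x}(\cdots)=\sum_{\sigma_x}(\cdots)$ per the paper's convention for $\int\d\bfx$, one reads off
\[
\int\mu_{\eta\cup\{x\}}(\d\sigma_{\eta\cup\{x\}})\,g
=\int\mu_\eta(\d\sigma_\eta)\sum_{\sigma_x\in\mathcal M}g\cdot\frac{Z_{\eta_{\tsup{x}}}}{Z_{\eta_{\tsup{x}}\cup\{x\}}}\one\{\sigma_{{\rm n}(x,\eta)\cup\{x\}}\text{ feasible}\},
\]
valid on the $\Pi$-full-measure event that $\eta_{\tsup{x}}$ is finite (handled by the indicator $\one\{\eta_{\tsup{x}}\text{ finite}\}$, which is almost surely $1$ in the subcritical regime but kept for a genuinely defined kernel). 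Plugging this back and identifying the right-hand side of~\eqref{def:Pap} with $\rho$ as in~\eqref{eq:PGG:pap} closes the GNZ verification.

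The graph-function claim is then essentially a bookkeeping remark: evaluated at $\bfx=x^{\tsup 1}$ and $\bfeta=\eta^{\tsup 1}$ the feasibility indicators are automatically $1$, so $\rho(x^{\tsup 1},\eta^{\tsup 1})=\beta\,Z_{\eta_{\tsup{x}}}/Z_{\eta_{\tsup{x}}\cup\{x\}}$, and each $Z$ in~\eqref{eq:Disc_WRM} depends on its cluster only through the abstract graph (the sum over feasible colorings is a graph invariant), so a graph isomorphism $\psi$ fixing $x$ sends $\eta_{\tsup{x}}$ and $\eta_{\tsup{x}}\cup\{x\}$ to their counterparts with equal partition functions. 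I expect the main obstacle to be purely technical rather than conceptual: carefully justifying the exchange of the Mecke integral with the $\mu_\eta$-integral (measurability of $\eta\mapsto\mu_\eta$, which the excerpt already established) and pinning down the precise combinatorial identity for how feasibility indicators and partition functions split upon cluster merging — in particular making sure the ``already feasible'' factor is exactly the one enforced by $\mu_\eta$ so that no double counting occurs. Everything else is routine.
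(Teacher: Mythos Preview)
Your proposal is correct and follows essentially the same route as the paper: apply the Slivnyak--Mecke theorem to the underlying PPP $\Pi$ and use the cluster-wise reweighting identity comparing $\mu_{\eta}$ and $\mu_{\eta\cup\{x\}}$ to produce exactly the factor $\beta\,Z_{\eta_{\tsup{x}}}/Z_{\eta_{\tsup{x}}\cup\{x\}}\cdot\one\{\text{feasible}\}$. The paper runs the computation from the right-hand side of~\eqref{def:Pap} to the left, whereas you start from the left and derive $\rho$; this is an immaterial difference in direction, and your treatment of the graph-function property (partition functions depend only on the abstract graph of the cluster) is the content of the paper's one-line remark ``$\rho$ is clearly a graph function.''
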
 
As in the purely discrete model of Section~\ref{sec:DL}, the canonical Papangelou intensity~\eqref{eq:PGG:pap} (see in particular the spatially discrete analogue~\eqref{eq:Disc_Pap}) is the main ingredient for the proof of our following second main result about the absence of quasilocal graph Papangelou intensities in the strong coupling regime.
\begin{thm}\label{thm_2}
Let $\beta<\beta_{\rm c}$ and consider the symmetric WRM where $p_+=p_->0$. Then, there exists no quasilocal Papangelou intensity that is also a graph function such that the GNZ equation~\eqref{def:Pap} is satisfied for $\K$. 
\end{thm}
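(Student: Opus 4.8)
\emph{Overall strategy.} The plan is to exhibit one pair $(\bfx^\ast,\bfeta^\ast)$ at which \emph{every} admissible graph-function Papangelou intensity for $\K$ is discontinuous in the uniform $\tau$-topology, closely following the proof of Theorem~\ref{thm_1}: the subcritical Poisson--Gilbert graph here plays the role of the subcritical dilute lattice there, and the obstruction is of Griffiths-singularity type, coming from the fact that rare but arbitrarily large substrate clusters carry a symmetry-broken WR measure whose local marginals react to the way in which distant pieces of the substrate are, or are not, joined together. I would first rewrite \eqref{eq:PGG:pap}: splitting $Z_{\eta_{\tsup x}\cup\{x\}}$ over the value at $x$ and using the spin-flip symmetry (here $p_+=p_-$), one obtains
\begin{equation*}
\frac{Z_{\eta_{\tsup x}}}{Z_{\eta_{\tsup x}\cup\{x\}}}=\frac{1}{p_0+(1-p_0)\,\mu_{\eta_{\tsup x}}\big(\sigma_i\neq-1\ \forall\, i\in{\rm n}(x,\eta)\big)},
\end{equation*}
so that, away from the local feasibility and finiteness indicators, $\rho(\bfx,\bfeta)$ is a fixed strictly decreasing function of a single free-boundary WR marginal on the cluster $\eta_{\tsup x}$.

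\smallskip
\emph{Step 1: reduction to the canonical intensity.} Let $\rho'$ be any graph-function Papangelou intensity with $\K\models$\eqref{def:Pap}. By uniqueness in the GNZ equation, $\rho'=\rho$ holds $\mathrm d\bfx\otimes\K(\mathrm d\bfeta)$-almost everywhere. I would upgrade this to an everywhere statement on the relevant configurations: for each finite graph pair $(H,H')$, $H'=H\cup\{v\}$, which is \emph{robustly realizable} as a Gilbert subgraph (i.e.\ admits an embedding in which all relevant interpoint distances avoid a neighbourhood of $1$), the event that $\eta$ realizes $H$ isolated near a given embedding and that adding $\bfx$ produces $H'$ has positive $\K\otimes\mathrm{Leb}$-measure, and on it both $\rho$ and $\rho'$ are constant (the graph-function property fixes the value, the only remaining spin dependence being the local feasibility indicator, which I will freeze). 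Hence $\rho'=\rho$ on all configurations whose cluster at $x$ is a robustly realizable finite graph.

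\smallskip
\emph{Step 2: the bad configuration.} I would fix a $\mathrm{Leb}$-generic point $x$, put $\bfx^\ast=(x,+1)$, and build $\bfeta^\ast=(\eta^\ast,\sigma^\ast)$ so that $x$ has exactly two neighbours $a,b$, with $\sigma^\ast_a=\sigma^\ast_b=0$ (so feasibility of $\sigma_x=+1$ always holds and plays no role), lying in two infinite clusters of $\eta^\ast$ whose bounded truncations can be completed to large ``$d$-dimensional WR-ordered blocks''; in particular $\rho(\bfx^\ast,\bfeta^\ast)=0$ by the finiteness indicator. Then, for every bounded $\Delta$, I produce two finite-cluster perturbations agreeing with $\bfeta^\ast$ on $\Delta$: in $\bfeta'$ one truncates the two clusters outside $\Delta$ into two disjoint large blocks $B$ (so $\eta'_{\tsup x}=B_a\sqcup B_b$ and $\mu_{\eta'_{\tsup x}}$ factorizes), while in $\bfeta''$ one truncates them and then joins their far ends through a third large block (so $\eta''_{\tsup x}$ is one large ``U''). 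Using that the symmetric high-density WRM on such blocks has, with free boundary condition, an infinite-volume limit equal to the symmetric mixture $\tfrac12(\mu^+\!+\mu^-)$ of two distinct extremal states, and that $\{\sigma_a\neq-1\}$ is an FKG-increasing event, one gets
\begin{equation*}
\mu_{\eta''_{\tsup x}}(\sigma_a\neq-1,\sigma_b\neq-1)-\mu_{\eta'_{\tsup x}}(\sigma_a\neq-1)\,\mu_{\eta'_{\tsup x}}(\sigma_b\neq-1)\ \longrightarrow\ \tfrac14(m^+-m^-)^2>0
\end{equation*}
as the blocks grow, where $m^\pm=\mu^\pm(\sigma_a\neq-1)$ differ precisely because of symmetry breaking. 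Since $t\mapsto(p_0+(1-p_0)t)^{-1}$ is strictly monotone, $\rho(\bfx^\ast,\bfeta')$ and $\rho(\bfx^\ast,\bfeta'')$ converge along these families to two distinct values $r',r''$ with $|r'-r''|\geq c_0>0$ uniformly in $\Delta$.

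\smallskip
\emph{Step 3: conclusion and the main obstacle.} Choosing the completions generically keeps $(\bfx^\ast,\bfeta')$ and $(\bfx^\ast,\bfeta'')$ in the co-null set where $\rho'=\rho$ (Step~1), so if $\rho'$ were quasilocal—hence continuous at $\bfeta^\ast$—then, letting $\Delta\uparrow\R^d$ along the two perturbation families, $\rho'(\bfx^\ast,\bfeta^\ast)$ would have to equal both $r'$ and $r''$, a contradiction; therefore no quasilocal graph-function Papangelou intensity satisfies \eqref{def:Pap} for $\K$. I expect the main work to be concentrated in Step~2, namely in turning the heuristic ``joining two large WR-ordered regions at an arbitrary distance changes a fixed local marginal by a bounded amount'' into a rigorous uniform lower bound $c_0>0$. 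This is exactly where the WR phase-transition input enters: one needs the existence of the two extremal symmetric states on large $d$-dimensional Gilbert blocks together with the FKG monotonicity of the WRM, and one must carry this through in the genuinely continuum PGG setting by embedding suitable robustly realizable ``box-like'' subgraphs into the Poisson--Gilbert graph. Controlling the finite-size convergence of the free-boundary block marginals, and checking that the joining construction does not disturb the cluster of $x$ inside $\Delta$, are the remaining technical points.
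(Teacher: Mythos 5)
Your Step 1 reduction and the overall covariance/symmetry-breaking mechanism in Step 2 are aligned in spirit with the paper's proof (which uses the same canonical intensity~\eqref{eq:PGG:pap}, a rescaled version of the lattice half-space construction from Theorem~\ref{thm_1}, and a GNZ-based lifting to general graph intensities). The algebraic identity you derive for $Z_{\eta_{\tsup x}}/Z_{\eta_{\tsup x}\cup\{x\}}$ in terms of the free-boundary marginal is correct.

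However, there is a genuine gap in Step 2: your argument relies on the symmetric WRM on the embedded block-like Gilbert subgraphs being in the symmetry-broken (phase transition) regime, so that the free-boundary limit is the nontrivial mixture $\tfrac12(\mu^+ + \mu^-)$ with $m^+\neq m^-$. On a fixed (rescaled) lattice-like embedding, that only happens when $p_+=p_-$ is close enough to $1/2$, exactly as in Theorem~\ref{thm_1}. But Theorem~\ref{thm_2} claims non-quasilocality for \emph{all} $p_+=p_->0$, and the paper emphasizes this stronger conclusion as a consequence of the ``additional spatial flexibility'' of the continuum. The missing ingredient is the vertex-thickening device of the paper's Step~2: replace each vertex $x$ of the rescaled bad lattice by a small cluster $\hat x = \kappa+x$ of $k$ points inside $B_{\eps/2}(x)$; by the hard-core constraint the non-$0$ spins inside $\hat x$ must be aligned, and the effective probability that $\hat x$ carries spin $0$ is $(1-2p)^k\to 0$. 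This puts the effective single-site spin back into the high-$p$ regime for any fixed $p>0$, after which your covariance bound goes through. Without this (or an equivalent mechanism that boosts the effective coupling), your construction only yields a bad configuration for $p$ sufficiently close to $1/2$, not for the full parameter range stated in the theorem.

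Two smaller remarks. First, the claim ``$\rho'=\rho$ holds $\mathrm d\bfx\otimes\K(\mathrm d\bfeta)$-almost everywhere, hence everywhere on robustly realizable configurations'' is doing a lot of work implicitly; the paper instead implements this carefully via explicit test functions $f_n^{\zeta^{n,\tsup 1}}$ concentrating on shrinking neighbourhoods of the discontinuity configurations and uses the GNZ equation directly, which is cleaner in the point-process setting where ``a.e.\ upgraded to everywhere'' needs both the graph-function property and the positive probability of isolated realizations you allude to. Second, your bad configuration $\bfeta^\ast$ places $a,b$ in two distinct infinite clusters, whereas the paper's $\zeta$ is a connected half-space union $\L\cup\{o\}$; both are legitimate, but the paper's choice lets the two approximating sequences $\zeta^n, \zeta'^n$ share a common limit configuration transparently, while your two-infinite-cluster variant requires a bit more bookkeeping to see that $\bfeta'$ and $\bfeta''$ both converge to the same $\bfeta^\ast$ in the uniform $\tau$-topology while the cluster at $x$ inside $\Delta$ stays undisturbed.
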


We highlight that, due to the additional spatial flexibility, the non-quasilocality holds on the whole symmetric parameter region. 
Before we continue, let us present some remarks on topology and soft-core WRMs.

\begin{remark}[Soft-core WRM]\label{rem_1}
As noted above, the classical continuum WRM is defined by a finite-range hard-core interaction
\begin{equation*}
    \one\{\s \text{ is feasible}\}=\prod_{i,j\in\eta\colon |i-j|<2a}\one\{\s_i\s_j\neq-1\}
\end{equation*}
and with an a-priori measure given by the superposition of two independent homogeneous PPP with intensities $\lambda_\pm$. In particular, the interaction function is local but not continuous in the vague topology. This is due to the hard-core $|i-j|<2a$. 

However, one can also consider a soft-core version of the model where $\one\{\bfs \text{ is feasible}\}$ is replaced by  
\begin{equation*}
    \psi(\s)=\prod_{i,j\in\eta}\big(1-\psi(|i-j|)\one\{\s_i\s_j=-1\}\big),
\end{equation*}
where $\psi$ is a smooth mollification function of the indicator $\one_{[0,2a)}$, e.g., some smooth decreasing function with value $1$ for all $r\in [0,2a)$ and value $0$ for $r>3a$. With this definition the soft-core WRM model is vaguely continuous. The same soft-core version can be defined for the discrete WRM on the PGG from above. Considering the proof, it will become clear that in this case, even though the soft-core WRM is vaguely continuous, the associated discrete WRM on the PGG is non-quasilocally Gibbs with respect to the vague topology.

Let us further note that in the soft-core WRM we cannot expect to find graph Papangelou intensities as distances between points may appear in which the interaction is non-trivial (non-hardcore) and varies continuously with respect to perturbing the distances. In general, the precise distances matter. This provides continuity with respect to local variations of the point cloud, as we will discuss now. 
The canonical version of the Papangelou intensity~\eqref{eq:PGG:pap} can also serve as a Papangelou intensity in the soft-core case. However, it is no-longer a graph Papangelou intensity and it is now continuous with respect to local perturbations of the point cloud. Hence, working with the vague topology is adequate for the soft-core WRM. This property (which we could call locally vague continuity) is of course a necessary requirement to be vaguely continuous. 
However, there is no vaguely continuous version of the Papangelou intensity, due to {\em non-local} reasons. 
More precisely, the canonical version of the Papangelou intensity~\eqref{eq:PGG:pap} is locally vaguely continuous, however, it is not vaguely continuous since, as in the proof of Theorem~\ref{thm_2}, we can choose a bad configuration $\eta$, as well as its $n$-dependent perturbation, based on point clouds such that the soft-core WRM interaction acts as a perfect hardcore interaction. We may choose the lattice bad configuration for this purpose, and even allow also for small enough perturbations, depending on the parameters of the mollification, as long as we stay in the perfect hardcore region. 
\end{remark}

\section{The WRM on the Poisson--Boolean model}\label{sec:PBM}
We now consider the case in which the random environment is given by a Poisson--Boolean model (PBM). 
For this, we let $\bar\eta$ denote a (iid-marked) PPP in $\R^d\times \R_+$ with intensity measure $\beta\d x\otimes \nu$, where $\beta>0$ represents the intensity of points in $\R^d$ and $\nu$ is the probability distribution of marks representing random radii. We assume that small radii have positive mass, i.e., $\nu(m<\eps)>0$ for all $\eps>0$, as well as existence of the $d$-th moment, i.e., $\int\nu(\d m)m^d<\infty$. The last property in particular implies the existence of a subcritical percolation phase for the associated PBM 
\begin{equation*}
   \BM(\bar\eta)=\bigcup_{x\in \eta}B_{m_x}(x), 
\end{equation*}
see~\cite{Gouere08}. Here, $B_{r}(x)$ denotes the open ball centered at $x$ with radius $r$. We highlight that we do not require the radii to be unbounded. 
Let us note that our modeling approach via the PBM is mathematically convenient, as a realization of the 
environment is given in terms of a marked point cloud, where radii constitute the marks. 
This may be generalized to dependent processes, allowing for dependence between either points 
or radii, or both.

Next, let us fix a constant $a>0$ and assume in the following that $\beta$ is sufficiently small such that the PBM $\BM^a(\bar\eta)=\bigcup_{x\in \eta}B_{m_x+a}(x)$ is still in the subcritical percolation regime, i.e., $\beta<\beta_{\rm c}$. Denote by $\mathcal X:=\{\bar\eta\subset \R^d\times \R_+\colon |\eta_\L|<\infty\text{ for all bounded }\L\subset\R^d\}$ the space of locally finite point clouds with marks in $\R_+$.

Now, let us present the continuum WRM, which will then be put into the random environment given by the PBM. For any bounded Borel set $A\subset\R^d$, we consider the (finite-volume) continuum WRM
\begin{equation*}
    \mu_A(\d\bar\s)=\frac{1}{Z_A}\Pi^\pm_A(\d \bar\s)\one\{\bar\s \text{ is feasible}\},
\end{equation*}
where $\Pi^\pm_A(\d \bar\s)$ is short-hand notation for the product of two independent PPPs $\Pi^+_A(\d \s^+)$ and $\Pi^-_A(\d \s^-)$ on $\R^d$ with intensity measures $\lambda_+\1\{x\in A\}\d x$ and $\lambda_-\1\{x\in A\}\d x$, respectively, and which represent particles with color $+$ and $-$, respectively. Feasibility is defined as the property that, for all $x\in \s^+$ and $y\in \s^-$, we have that $B_a(x)\cap B_a(y)=\emptyset$. Let us note that, due to the Poisson thinning theorem, we can equivalently consider $\Pi^\pm_A$ to be a single PPP with intensity $(\lambda_++\lambda_-)\one\{x\in A\}\d x$, which is iid marked in $\mathbb M=\{+,-\}$ with a Bernoulli mark probability $\lambda_+/(\lambda_++\lambda_-)$ to see a $+$ mark. Consequently, we can think of $\bar\s=(\s^+,\s^-)$ as a two-color configuration, i.e., a subset of $\R^d\times\mathbb M$.
With a further bijection, we can then also identify it with the pair $(\o,\s_\o)$, where $\o=\s^+\cup\s^-\subset\R^d$ is the gray configuration of all particles and $\s_\o\in \{+,-\}^\o$ is the coloring of the gray configuration $\o$. Again, $Z_A$ is the associated normalization function that turns $\mu_A$ into a probability measure on $\mathcal X^\pm:=\{\bar\s\subset\R^d\times \mathbb M\colon |\o_\L|<\infty\text{ for all bounded }\L\subset\R^d\}$, the space of locally finite point clouds with marks in $\mathbb M$. For a WR configuration $\bar\s\in\mathcal X^\pm$ and a fixed set $A\subset\R^d$, we denote by $\bar\s_A$ the restriction of the configuration to the set $A\times\mathbb M$, that is $\bar\s_A = \bar\s\cap (A\times\mathbb M)$.

\medskip
We are interested in locality properties of the joint probability measure
\begin{equation}\label{eq:PBM:joint}
\mathbb K(\d\bar\eta,\d\bar\s):=\Pi(\d\bar\eta)\mu_{\BM(\bar\eta)}(\d\bar\s),
\end{equation}
where $\mu_{\BM(\bar\eta)}$ should be understood as the (countable) infinite product of WRM measures for each of the (almost-surely) finite connected regions of $\BM^a(\bar\eta)$.
We highlight that the assumption of subcriticality of $\BM^a(\bar\eta)$ ensures that indeed WRMs in different clusters do not interact. Let us note that the marginal $\int\mathbb K(\d\bar\eta,\d\bar\s)f(\bar\s)$, or annealed measure, can be seen as a Cox--Gibbs point process since the underlying PPP is governed by a random intensity measure based on $\bar\eta$, see also~\cite{jahnel2025phase}.

\medskip
For our analysis, it is essential to relate the joint model $\mathbb K$ to a dependently marked PPP. For this, in analogy to the model in Section~\ref{sec:PGG}, we first equip each point $x\in\eta$, in the PPP of the environment, independently with an extended mark $(m_x,\bar\s_\ssup{x})\in\mathcal M:=\R_+\times\mathcal X^\pm$ consisting of both its radius $m_x$ and a two-layer Poisson point cloud $\bar\s_\ssup{x} = \bar\s+x$, governed by the joint probability measure $\nu(\d m)\Pi^\pm_{B_m(o)}(\d\bar\s)$, where $\bar\s+x=(\o+x,\s_{\o+x})$.
This iid-marked PPP serves as an a-priori measure on the space of locally finite point configurations in $\R^d$ with marks in $\mathcal M$, i.e., on $\O:=\{\bfeta\subset\R^d\times \mathcal M\colon |\eta_\L|<\infty\text{ for all bounded }\L\subset\R^d\}$. This implies that we work with integrations of the form
$$\int\d \bfx f(\bfx) = \int \d x \int \nu(\d m)\int\Pi_{B_m(o)}^{\pm}(\d\bar\s)f(x, (m_x,\bar\s+x)).$$
We further consider non-negative measurable Papangelou intensities $\rho\colon (\R^d\times \mathcal M)\times \O\to [0,\infty)$ that give rise to infinite-volume Gibbs point processes via the GNZ equations. 
\begin{definition}[GNZ equations]
A marked point process $P$ on $\O$ {\em satisfies the GNZ equations with respect to the Papangelou intensity $\rho$}, if
\begin{align}\label{eq:PBM:GNZ_2}
  \int P(\d\bfeta)\sum_{\bfx\in\bfeta}f(\bfx,\bfeta)=\int\d\bfx\int P(\d\bfeta)f(\bfx,\bfeta\cup\{\bfx\}) \rho(\bfx,\bfeta),
\end{align}
 for all $f\colon (\R^d\times\mathcal M)\times \O\to [0,\infty)$.
\end{definition}

Now, our joint measure $\mathbb K$ can be understood as a projection of a marked point process on $\O$, based on the iid-marked PPP described above, in the following sense. 
\begin{lemma}\label{lem:Model3}
There exists a marked point process $\mathbb K^\ssup{1}$ on $\O$ such that,
for all measurable functions $f\colon\mathcal X\times \mathcal X^\pm\to[0,\infty)$, we have that 
$$
\int\mathbb K(\d\bar\eta,\d\bar\s)f(\bar\eta,\bar\s)=\int\mathbb K^\ssup{1}(\d\bfeta)f(\bar\eta, \bigcup_{x\in \eta}\bar\s_\ssup{x}). 
$$
\end{lemma}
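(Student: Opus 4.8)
The plan is to disintegrate both measures over the environment and reduce the claim to an identity of conditional laws of the colour part. Since $\bfPi^\ssup{1}$ is $\bfPi$ weighted by the density $\chi(\bfeta)$, which depends on $\bfeta$ only through $\bar\eta$ and the colour clouds $(\bar\s_\ssup{x})_{x\in\eta}$, and since this density integrates to one over the colour marks for $\Pi$-a.e.\ $\bar\eta$ (as noted below~\eqref{eq:PBM:joint3}), the environment marginal of $\mathbb K^\ssup{1}$ is again $\Pi$. Moreover $\bar\mu(\bfeta)$ depends on $\bfeta$ only through $\bar\eta$ and the superposition $\bigcup_{x\in\eta}\bar\s_\ssup{x}$, and — using subcriticality of $\BM^a(\bar\eta)$, which makes every cluster of $\BM^a(\bar\eta)$ bounded, together with the finite range of the feasibility constraint, so that feasibility factorises over these clusters — it is exactly the Radon--Nikodym factor $\one\{\bar\s\text{ feasible}\}\prod_k Z(\bar\eta^\ssup{k})^{-1}$ turning $\Pi^\pm_{\BM(\bar\eta)}$ into $\mu_{\BM(\bar\eta)}$. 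Hence the lemma reduces to the single claim: \emph{conditionally on $\bar\eta$, the superposition $\bigcup_{x\in\eta}\bar\s_\ssup{x}$ under $\bfPi^\ssup{1}$ is distributed as $\Pi^\pm_{\BM(\bar\eta)}$}, i.e.\ as a two-layer PPP of intensities $\lambda_\pm$ on $\BM(\bar\eta)$; in the text following~\eqref{eq:PBM:joint3} this is already recorded at the level of the intensity measure, and it remains to upgrade it to the full law.

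For this upgrade I would argue as follows. Conditionally on $\bar\eta$, under $\bfPi$ the clouds $(\bar\s_\ssup{x})_{x\in\eta}$ are independent with $\bar\s_\ssup{x}$ a two-layer PPP of intensities $\lambda_\pm$ on $\BM(\bar x)$. Realise the choice variables $(w_y)_{y\in\eta}$ as a single family of iid $\mathrm{Unif}[0,1]$ variables, independent of everything else; since the $w_y$ are a.s.\ distinct, $\eta$ is locally finite, and $\nu[m^d]<\infty$ guarantees that every point of $\R^d$ lies in finitely many balls of $\BM(\bar\eta)$, the sets $C_x:=\BM(\bar x)\setminus\bigcup_{y\colon w_y<w_x}\BM(\bar y)$, $x\in\eta$, form a measurable partition of $\BM(\bar\eta)$. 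The density $\chi$ conditions each $\bar\s_\ssup{x}$ to have no point in $R_x:=R\big((\bar x,w_x),(\bar\eta,w_\eta)\big)=\BM(\bar x)\setminus C_x$ and compensates by $T^{-1}((\bar x,w_x),(\bar\eta,w_\eta))=\Pi^\pm_{\BM(\bar x)}(\bar\s_\ssup{x}\cap R_x=\emptyset)^{-1}=\e^{(\lambda_++\lambda_-)\abs{R_x}}$; by the restriction and independence properties of Poisson processes, the conditional law of $\bar\s_\ssup{x}$ given $(\bar\eta,(w_y)_y)$ becomes that of a two-layer PPP of intensities $\lambda_\pm$ on $C_x$ (points are only removed, never added, so no coupling between distinct clouds is introduced), and these remain independent over $x\in\eta$. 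Superposing and using $\BM(\bar\eta)=\bigsqcup_{x\in\eta}C_x$, the cloud $\bigcup_{x\in\eta}\bar\s_\ssup{x}$ is, conditionally on $(\bar\eta,(w_y)_y)$, a two-layer PPP of intensities $\lambda_\pm$ on $\BM(\bar\eta)$; as this no longer depends on $(w_y)_y$, integrating them out against $\Leb(\d w_\eta)$ leaves it unchanged. This gives the claim, whence $\int\mathbb K^\ssup{1}(\d\bfeta)f(\bar\eta,\bigcup_{x}\bar\s_\ssup{x})=\int\mathbb K(\d\bar\eta,\d\bar\s)f(\bar\eta,\bar\s)$ by unwinding the reductions of the previous paragraph.

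I expect the main obstacle to be precisely this middle step: turning the informal overlap-removal picture into the rigorous statement that conditioning an independent Poisson cloud to vanish on $R_x$ and renormalising by $T^{-1}$ produces a Poisson cloud of the same intensities on $C_x$, and that the cells $C_x$ genuinely tile $\BM(\bar\eta)$. The almost-sure finiteness of $T$ and of every cluster of $\BM^a(\bar\eta)$, without which neither the cells are bounded nor the construction well defined, is exactly what subcriticality delivers and is already flagged below~\eqref{eq:PBM:joint3}. A point to be made precise in the final write-up is that the product over $x\in\eta$ in the definition of $\chi(\bfeta)$ is to be read with a \emph{common} family of choice variables $(w_y)_{y\in\eta}$, equivalently $\chi(\bfeta)=\int\Leb(\d w_\eta)\prod_{x\in\eta}\one\{\bar\s_\ssup{x}\cap(R_x-x)=\emptyset\}\,T^{-1}((\bar x,w_x),(\bar\eta,w_\eta))$: it is this coupling that renders the orderings used in the various $R_x$ mutually consistent, hence makes $(C_x)_{x\in\eta}$ a partition. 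The remaining bookkeeping — the factorisation of feasibility over the clusters of $\BM^a(\bar\eta)$ and the identification $\mu_{\BM(\bar\eta)}=\bar\mu(\cdot)\,\Pi^\pm_{\BM(\bar\eta)}$ — is routine.
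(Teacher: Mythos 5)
Your argument is correct and in fact supplies a proof that the paper leaves implicit: the lemma is presented as a ``summary of the construction'' with no written proof, so the content has to be exactly the disintegration you carry out, namely that the environment marginal of $\bfPi^\ssup{1}$ is $\Pi$, that conditionally on $\bar\eta$ the superposition $\bigcup_{x\in\eta}\bar\s_\ssup{x}$ under $\bfPi^\ssup{1}$ is $\Pi^\pm_{\BM(\bar\eta)}$, and that $\bar\mu$ then reweights this to $\mu_{\BM(\bar\eta)}$. Your steps for the middle claim (partition $\BM(\bar\eta)=\bigsqcup_x C_x$ for a.e.\ realisation of the choice variables, local finiteness from $\nu[m^d]<\infty$, restriction/independence of Poisson processes, and $w$-independence of the resulting law) are all sound. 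One small slip: the phrase ``the environment marginal of $\mathbb K^\ssup{1}$ is again $\Pi$'' should read $\bfPi^\ssup{1}$ at that point, since the normalisation argument given there only addresses $\chi$; that the $\bar\mu$ factor also integrates to one over the colour marks is precisely what your second step establishes.

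The point you flag at the end deserves emphasis, because it is not merely a matter of polish. If~\eqref{eq:PBM:joint3} is read literally, each factor $\chi(\bfx,\bar\eta)$ in the product $\chi(\bfeta)=\prod_{x\in\eta}\chi(\bfx,\bar\eta)$ carries its own independent integral over $w_\eta$, and with independent draws of the choice variables the cells $C_x$ do \emph{not} tile $\BM(\bar\eta)$. In that reading the superposition $\bigcup_x\bar\s_\ssup{x}$ still has the correct first-order intensity $\lambda_\pm\one\{\cdot\in\BM(\bar\eta)\}$ (which is all the text below~\eqref{eq:PBM:joint3} asserts), but it is a mixture rather than a Poisson process: on an overlap $O=\BM(\bar x_1)\cap\BM(\bar x_2)$ the void probability is $\bigl(\tfrac12(1+\e^{-(\lambda_++\lambda_-)\abs{O}})\bigr)^2\neq\e^{-(\lambda_++\lambda_-)\abs{O}}$. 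Lemma~\ref{lem:Model3} would then be false. So the coupled reading
\begin{equation*}
\chi(\bfeta)=\int\Leb(\d w_\eta)\prod_{x\in\eta}\one\bigl\{(\s_\ssup{x}^+\cup\s_\ssup{x}^-)\cap\bigl(R((\bar x,w_x),(\bar\eta,w_\eta))-x\bigr)=\emptyset\bigr\}\,T^{-1}\bigl((\bar x,w_x),(\bar\eta,w_\eta)\bigr),
\end{equation*}
with a \emph{single} shared family $(w_y)_{y\in\eta}$, is the one that makes the construction work, and your proof correctly proceeds from it.
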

The process $\mathbb K^\ssup{1}$ is generated by a random algorithm that assigns WR-points in overlapping environment balls as marks to points $x\in\eta$. 
 The construction of $\mathbb K^\ssup{1}$ is given in the Appendix in Section~\ref{sec:ap}. 
 Although one can use other constructions, 
 the correct treatment of overlaps is essential, since adding points may also create overlaps. We establish in Proposition~\ref{prop_3} that the measure $\mathbb K^\ssup{1}$ satisfies the GNZ equations~\eqref{eq:PBM:GNZ_2} with respect to a Papangelou intensity $\rho$.

\medskip
If we focus on properties of the joint measure $\mathbb K$ on $\mathcal X\times\mathcal X^\pm$ itself, a more direct approach is available: Although $\mathbb K$ is not a marked point process, we can show that it satisfies a generalized form of the GNZ equations, see~\eqref{eq:PBM:GNZ_3} below. 
Here a generalized intensity function appears, which is a function  $\rho\colon((\R^d\times\R_+)\times\mathcal X^\pm)\times(\mathcal X\times\mathcal X^\pm)\to [0,\infty)$. We call this intensity function  again simply  a Papangelou intensity. It allows the addition of a point $(\bar x,\bar\sigma_{\BM(\bar x)})$ to a configuration $(\bar\eta,\bar\sigma)$ only if the overlapping region is free of points coming from $\bar\sigma$, 
see Figure~\ref{fig:PBM:pap}.
Like in the previous sections, we are interested in locality properties of this Papangelou intensity, in the following sense.
\begin{figure}
\begin{minipage}{.45\textwidth}
    \begin{center}
	\includegraphics[width=.9\textwidth]{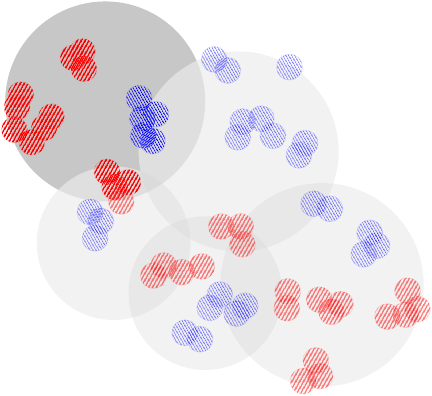}
    \end{center}	
\end{minipage}\hfill
\begin{minipage}{.5\textwidth}
    \caption{A realization of a WRM on the clusters of a PBM, represented in gray. In blue (resp.~red) the $+1$ (resp.~$-1$) marks of the WRM.
    According to \eqref{eq:PBM:pap}, adding the point $(\bar x,\bar\s_{\BM(\bar x)})$ given by the dark-gray environment point and the highlighted WR points is possible only if there are no WR points of $(\bar\eta,\bar\sigma)$ in the overlap $\BM(\bar x)\cap\BM(\bar\eta)$.}
    \label{fig:PBM:pap}
\end{minipage}    
\end{figure}

\begin{definition}[Quasilocality]
A configuration $(\bar\eta,\bar\sigma)\in \mathcal X\times\mathcal X^\pm$ is called \emph{good} for $\rho\colon((\R^d\times\R_+)\times\mathcal X^\pm)\times(\mathcal X\times\mathcal X^\pm)\to [0,\infty)$ if, for all $(\bar x,\bar\sigma_{(x)})\in (\R^d\times\R_+)\times\mathcal X^\pm$,
	\begin{equation*}
		\sup_{(\bar\eta',\bar\sigma')\colon (\bar\eta'_\Delta,\bar\sigma'_\Delta)=(\bar\eta_\Delta,\bar\sigma_\Delta)}\abs{\rho((\bar x,\bar\sigma_{(x)}),(\bar\eta',\bar\sigma')) - \rho((\bar x,\bar\sigma_{(x)}),(\bar\eta,\bar\sigma))}\to 0\qquad \text{ as } \Delta\uparrow\R^d. 
	\end{equation*} 
    We say that $\rho$ is \emph{quasilocal} if all configurations in $\mathcal X\times\mathcal X^\pm$ are good for $\rho$.
\end{definition}

As a replacement for the notion of graph functions of our second model, we consider the following robustness with respect to local perturbations in the environment. This is done, as mentioned above in the context of graph functions for the PGG, to remove potential additional discontinuities in Papangelou intensities that only come from a local sensitivity. More precisely, we define $\varepsilon$-balls around finite subsets $\bar\eta^{\ssup f}=\{(y_1,m_{y_1}),\dots, (y_l,m_{y_l})\}\subset\bar\eta$ via
\begin{align*}
B_\varepsilon(\bar\eta^{\ssup f},\bar\eta)=\{\bar\zeta=\bar\zeta^{\ssup f}&\cup(\bar\eta\setminus \bar\eta^{\ssup f})\in \O\colon \bar\zeta^{\ssup f}=\{(z_1,m_{z_1}),\dots, (z_l,m_{z_l})\}\\
&\text{ and for all }i\in\{1,\dots,l\}, \, \zeta^{\ssup f}\cap B_\varepsilon(y_i)=z_i, |m_{z_i}-m_{y_i}|<\varepsilon\}.
\end{align*}
In words, $B_\varepsilon(\bar\eta^{\ssup f},\bar\eta)$ is the set of configurations which are perturbations, both with respect to the position and the radius mark, of a finite subset of a given environment $\bar\eta$. Similar to the previous model, for some $\bar y\in \R^d\times\R_+$, we denote by $\bar\eta_\tsup{\bar y}$ the set of marked Poisson points in $\bar\eta\setminus \{\bar y\}$ that give rise to the cluster in $\BM^a(\bar\eta\cup\{\bar y\})$ that contains $y$. 
In this setting, let $\bar\eta^\tsup{1}=(\eta,(m_\eta,\emptyset))\in \mathcal X\times\mathcal X^\pm$ be the configuration where the WR-part of the mark consists of the empty configuration.
\begin{definition}[Local robustness]\label{def:PBM-locrob}
    We say that a function $\rho\colon((\R^d\times\R_+)\times\mathcal X^\pm)\times(\mathcal X\times\mathcal X^\pm)\to [0,\infty)$ is {\em locally robust} if, for all $\bar x\in\R^d\times\R_+$ and all $\bar\eta\in \mathcal X$ such that $\BM^a(\bar\eta)$ contains no infinite component, we have that 
    \begin{equation*}
        \sup_{\bar\eta'\in B_\varepsilon(\bar\eta_\tsup{\bar x},\bar\eta)}\abs{\rho(\bar x^\tsup{1},\bar\eta'^{\tsup{1}}) - \rho(\bar x^\tsup{1},\bar\eta^\tsup{1})}\to 0\qquad \text{ as } \varepsilon\to0. 
    \end{equation*}
\end{definition}

Correspondingly to the previous models, we write ${\rm n}(\bar x,\bar\eta)\subset\bar\eta$ for the vertices in $\bar y\in\bar\eta$ that such that $\BM(\bar y)\cap\BM(\bar x)\neq\emptyset$.
\begin{proposition}\label{prop_2}
The measure $\mathbb K$ on $\mathcal X\times\mathcal X^\pm$ satisfies
\begin{equation}\label{eq:PBM:GNZ_3}
\begin{split}
  &\int \mathbb K(\d(\bar\eta,\bar\sigma))\sum_{\bar x\in\bar\eta}f((\bar x,\bar\sigma_{\BM(\bar x)}),(\bar\eta,\bar\sigma))=\int\d\bar x\int \Pi^\pm_{\BM(\bar x)}(\d \bar\sigma')\\
  &\qquad\qquad 
  \int \mathbb K(\d(\bar\eta,\bar\sigma))f\big((\bar x,\bar\s'),(\bar\eta\cup\{\bar x\},\bar\s_{\BM(\bar\eta)}\bar\s'_{\BM(\bar x)})\big) \rho\big((\bar x,\bar\s'),(\bar\eta,\bar\s)\big),
\end{split}
\end{equation}
for all $f\colon ((\R^d\times\R_+)\times \mathcal X^\pm)\times (\mathcal X\times\mathcal X^\pm) \to [0,\infty)$,
with respect to the locally robust Papangelou intensity
\begin{equation}\label{eq:PBM:pap}
\begin{split}
  \rho\big((\bar x,\bar\s'),(\bar\eta,\bar\s)\big) = &\beta\frac{Z_{\BM(\bar\eta_\tsup{\bar x})}}{Z_{\BM(\bar\eta_\tsup{\bar x}\cup\bar x)}}\one\{\bar\s'_{\BM(\bar x)}\bar\s_{\BM({\rm n}(\bar x,\bar \eta))\setminus \BM(\bar x)} \text{ is feasible}\} \chi_{\bar x,\bar\eta_\tsup{\bar x}}(\bar\s),
\end{split}
\end{equation}
where $\chi_{\bar x,\bar\eta_\tsup{\bar x}}(\bar\s) = \e^{(\lambda_++\lambda_-)\abs{\BM(\bar x)\cap \BM(\bar\eta_\tsup{\bar x})}}\one\{\bar\s_{\BM(\bar x)\cap\BM(\bar\eta_\tsup{\bar x})}=\emptyset\}\one\{\bar\eta_{\tsup{\bar x}}\text{ is finite}\}$.
\end{proposition}
Note that, compared to the corresponding canonical Papangelou intensities~\eqref{eq:Disc_Pap} and~\eqref{eq:PGG:pap}, in~\eqref{eq:PBM:pap} we find an additional term $\chi_{\bar y,\bar\eta_\tsup{\bar y}}$ that removes overlapping points from the boundary condition.
Finally, we present our third main result that states that, for large enough symmetric coupling, there is no locally robust quasilocal Papangelou intensity for $\mathbb{K}$ but, as in the lattice case, for uniformly bounded radii, i.e., $\nu(m<T)=1$ for some $T>0$, and sufficiently weak coupling, the quasilocality is restored. 

\begin{thm}\label{thm:PBM:nG}
Let $\beta<\beta_{\rm c}$ and consider the symmetric WRM, where $\lambda_+=\lambda_-$ is sufficiently large. Then, there exists no locally robust quasilocal Papangelou intensity such that~\eqref{eq:PBM:GNZ_3} is satisfied for $\mathbb K$. On the other hand, if the radii are uniformly bounded, we have that for all sufficiently small $\lambda_+=\lambda_-$, the canonical Papangelou intensity~\eqref{eq:PBM:pap} is quasilocal. 
\end{thm}
We highlight that the phase transition of existence and absence of quasilocality is similar in the fully continuous and in the lattice case of Section~\ref{sec:DL}, but it is different in the intermediate model of Section~\ref{sec:PGG}. This is a consequence of the fact that, from the perspective of the WRM, the intermediate model has much less spatial structure compared to the other two models.   
Let us finally mention that the requirement that $\nu(m<\eps)>0$ for all $\eps>0$ is for convenience of the proof only. The results should also hold for general (even deterministic) marks that guarantee existence of a subcritical percolation phase. Let us also mention that the requirement of subcriticality of the environment is mostly for convenience since then we do not have to define infinite-volume WRMs on general domains. We expect the results to hold also for $\beta\ge \beta_{\rm c}$. Let us finally mention that, while it is true that in general quasilocality is not as frequently occurring for continuum models as it is for discrete models, the surprising phenomenon here is that, even in a finite-range setting, where quasilocality would still be expected to hold, the presence of an environment breaks this continuity. The relevance of quasilocality for continuum specifications is further highlighted by the possibility of representing them through hyperedge potentials, very much in the spirit of the known Kozlov-type theorems on the lattice, see~\cite{jahnel2021gibbsian}.

\section{Proofs}\label{sec_proofs}
\begin{proof}[Proof of Theorem~\ref{thm_1}]
Let us start with the proof of non-quasilocal Gibbsianness. We denote $p:=p_+=p_-$. 

\medskip
{\bf Step 1:} 
As a first step, we consider the canonical candidate for a specification given by the conditional measure at the origin $o\in \Z^d$ given some typical boundary condition. To be more precise, let $o\in \eta$ be a gray configuration such that the maximally connected component at the origin $o$, the {\em cluster} of occupied sites containing the origin, is finite. Furthermore,  consider a boundary condition $\eta^\tsup{1}_{o^c}=(\eta\setminus\{o\},1_{\eta\setminus\{o\}})$, where again $1_\eta$ denotes the all-one coloring of the gray configuration $\eta$ and $o^{\rm c}=\Z^d\setminus\{o\}$. Then, using the independence of the WRM with respect to disjoint clusters in $\eta$, we have that
\begin{equation}\label{eq:DL:JointCond}
\begin{split}
 	K(\s_o=1\vert \eta^\tsup{1}_{o^c})=\frac{q\, p}{q(1-p)+(1-q)Z_{\eta_\tsup{o}\cup \{o\}}/Z_{\eta_\tsup{o}}}. 
\end{split}
\end{equation} 
Note that all terms in~\eqref{eq:DL:JointCond} except for $Z_{\eta_\tsup{o}\cup\{o\}}/Z_{\eta_\tsup{o}}$ are of course strictly spatially local with respect to $\eta_{o^c}$. They are even Markovian in the sense that only the configuration in the one-step neighborhood of the origin is relevant.  

\medskip
{\bf Step 2:}
In this step, we consider a particular choice of boundary condition $\eta_{o^c}$ in order to exhibit a discontinuity. More precisely, consider the half spaces $\L_+=\{(x_1,\dots, x_d)\in \Z^d\colon x_1>0\}$ and $\L_-=\{(x_1,\dots, x_d)\in \Z^d\colon x_1<0\}$ and let $\L^n_+=\L_+\cap (Q_n(o)\cup Q_n(2n e_2))$ and $\L^n_-=\L_-\cap (Q_n(o)\cup Q_n(2n e_2))$, where $Q_n(x)=[-n,n]^d+x$ is the $n$-box centered at $x\in \Z^d$ and $e_i=(0_1,\dots, 0_{i-1},1_i,0_{i+1},\dots, 0_d)$ is the unit vector in the $i$-th coordinate. In words, $\L^n_\pm$ are two disjoint rectangles which are symmetric towards each other and also symmetric with respect to the reflection that identifies $o$ and $2ne_2$, see Figure~\ref{fig:lattice_pt}.
\begin{figure}[ht]
\begin{minipage}{.4\textwidth}
	\begin{center}
		\includegraphics[width=.7\textwidth]{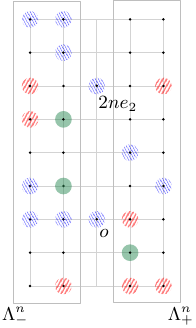}
	\end{center}	
\end{minipage}\hfill
\begin{minipage}{.6\textwidth}
	\caption{A realization of a WRM on the truncated half-lattices $\L_\pm^{n}$. Black vertices represent occupation in the underlying Bernoulli site percolation. Blue (resp.~green, red) disks represent the $+$ (resp.~$0$, $-$) spins in the WRM.}
	\label{fig:lattice_pt}
\end{minipage}
\end{figure}
There are many other possible choices for such regions, however, this choice seems computationally convenient.

Now let $\L^n=\L^n_-\cup\L^n_+$, $\zeta^n=\L^n\cup\{o\}$ and note that it is only the origin that creates a connection between the symmetric sets since $\zeta^n\setminus\{o\}=\L^n$. We want to compare 
$Z_{\zeta^n}/Z_{\zeta^n\setminus\{o\}}$ with $Z_{\zeta'^n}/Z_{\zeta'^n\setminus\{o\}}$ where $\zeta'^n=\zeta^n\cup \{2ne_2\}$. Note that in $\zeta'^n$ there is an additional connection between $\L^n_-$ and $\L^n_+$ at the point $2ne_2$ present for any $n$, however, both $\zeta^n$ and $\zeta'^n$ share the same limiting configuration $\zeta=\L_-\cup \L_+\cup\{o\}=\L\cup\{o\}$, as $n$ tends to infinity. Hence, in order to show that any boundary condition $\eta_{o^c}$ with $\zeta_{o^c}=\L$ is a discontinuity point of $\eta_{o^c}\mapsto K(\s_o=1\vert \eta^\tsup{1}_{o^c})$, it suffices to show that 
\begin{equation*}
\begin{split}
\liminf_{n\uparrow\infty}\Big|\frac{Z_{\zeta'^n}}{Z_{\zeta'^n\setminus\{o\}}}-\frac{Z_{\zeta^n}}{Z_{\zeta^n\setminus\{o\}}}\Big|>0.
\end{split}
\end{equation*}
For this, note that for all $n\ge 1$, with $z_n=2ne_2$, we have $Z_{\zeta^n\setminus\{o\}}/Z_{\zeta'^n\setminus\{o\}}=Z_{\L^n}/Z_{\L^n\cup\{z_n\}}\ge 1$, since $\1\{\s_{\L^n\cup\{z_n\}}\text{ is feasible}\}\le \1\{\s_{\L^n}\text{ is feasible}\}$, and thus, 
\begin{equation*}
\begin{split}
\Big|\frac{Z_{\zeta'^n}}{Z_{\zeta'^n\setminus\{o\}}}-\frac{Z_{\zeta^n}}{Z_{\zeta^n\setminus\{o\}}}\Big|=\frac{Z_{\zeta^n\setminus\{o\}}}{Z_{\zeta'^n\setminus\{o\}}}\Big|\frac{Z_{\zeta'^n}}{Z_{\zeta^n\setminus\{o\}}}-\frac{Z_{\zeta^n}Z_{\zeta'^n\setminus\{o\}}}{Z_{\eta^n\setminus\{o\}}Z_{\zeta^n\setminus\{o\}}}\Big|\ge \Big|\frac{Z_{\zeta'^n}}{Z_{\zeta^n\setminus\{o\}}}-\frac{Z_{\zeta^n}Z_{\zeta'^n\setminus\{o\}}}{Z_{\zeta^n\setminus\{o\}}Z_{\zeta^n\setminus\{o\}}}\Big|.
\end{split}
\end{equation*}
Hence, using the symmetry with respect to $o$ and $z_n$, it suffices to show that 
\begin{equation}\label{eq:DL:JointCond_2}
\begin{split}
\liminf_{n\uparrow\infty}\Big|\frac{Z_{\L^n\cup\{o,z_n\}}}{Z_{\L^n}}-\Big(\frac{Z_{\L^n\cup\{o\}}}{Z_{\L^n}}\Big)^2\Big|>0.
\end{split}
\end{equation}

\medskip
{\bf Step 3:}
Now, in order to prove~\eqref{eq:DL:JointCond_2}, let us denote by $x_o$, respectively $y_o$, the unique neighbor of $o$ in $\L^n_-$, respectively $\L^n_+$. Then, making explicit the probability with respect to the origin, we have that
\begin{equation*}
	\frac{Z_{\L^n\cup\{o\}}}{Z_{\L^n}}=\mu_{\L^n}(\underbrace{1 - p + p\1\{\sigma_{x_o}=\sigma_{y_o}=0\} - p\1\{\sigma_{x_o}\sigma_{y_o}=1\}}_{=:\phi(\sigma_{x_o},\sigma_{y_o})}),
\end{equation*}
and analogously for $Z_{\L^n\cup\{z_n\}}/Z_{\L^n}$ with $x_n$, respectively $y_n$, the unique neighbor of $z_n$ in $\L^n_-$, respectively in $\L^n_+$. Similarly, making explicit the probability with respect to $o$ and $z_n$, we have 
\begin{equation*}
	\frac{Z_{\L^n\cup\{o, z_n\}}}{Z_{\L^n}}=\mu_{\L^n}(\phi(\sigma_{x_o},\sigma_{y_o})\phi(\sigma_{x_n},\sigma_{y_n})),
\end{equation*}
and thus~\eqref{eq:DL:JointCond_2} can be seen as a statement for the covariance of $\phi(\sigma_{x_o},\sigma_{y_o})$ and $\phi(\sigma_{x_n},\sigma_{y_n})$ under the measure $\mu_{\L^n}$. Since constants do not play a role in the covariance, it suffices to prove that for all $p$ sufficiently close to $1/2$,
\begin{equation}\label{eq:DL:JointCond_4}
\begin{split}
\liminf_{n\uparrow\infty}\Cov_n\big(\1\{\sigma_{x_o}\hspace{-0.15cm}=\sigma_{y_o}\hspace{-0.15cm}=0\} - \1\{\sigma_{x_o}\sigma_{y_o}\hspace{-0.15cm}=1\}; \1\{\sigma_{x_n}\hspace{-0.15cm}=\sigma_{y_n}\hspace{-0.15cm}=0\} - \1\{\sigma_{x_n}\sigma_{y_n}\hspace{-0.15cm}=1\}\big)>0,
\end{split}
\end{equation}
where $\Cov_n$ denotes the covariance with respect to $\mu_{\L^n}$. We first focus on the dominant term $\Cov_n\big(\1\{\sigma_{x_o}\sigma_{y_o}=1\};\1\{\sigma_{x_n}\sigma_{y_n}=1\}\big)$.

\medskip
{\bf Step 4:} We claim that, for all $p$ sufficiently close to $1/2$, we have that,
\begin{equation*}
\begin{split}
\liminf_{n\uparrow\infty}\Cov_n\big(\1\{\sigma_{x_o}\sigma_{y_o}=1\};\1\{\sigma_{x_n}\sigma_{y_n}=1\}\big)>1/5. 
\end{split}
\end{equation*}
Indeed, first note that by independence and symmetry with respect to the halfspaces $\L_\pm^n$,
\begin{equation}\label{eq:DL:JointCond_3}
\begin{split}
\Cov_n\big(\1\{\s_{x_o}\s_{y_o}=1\};\1\{\s_{x_n}\s_{y_n}=1\}\big)=&\ 2\big(\mu_{\L^n_-}(\s_{x_o}=\s_{x_n}=1)^2\\
&+\mu_{\L_-^n}(\sigma_{x_o}=-\sigma_{x_n}=1)^2- 2\mu_{\L_-^n}(\sigma_{x_o}=1)^4\big),
\end{split}
\end{equation}
where 
\begin{equation*}
\begin{split}
0\le 1/2-\mu_{\L_-^n}(\sigma_{x_o}=1)=\mu_{\L_-^n}(\sigma_{x_o}=0)/2.
\end{split}
\end{equation*}
We will show in the next step below that $\mu_{\L_-^n}(\sigma_{x_o}=0)$ tends to zero as $p$ tends to $1/2$. 
In order to estimate the other two terms in~\eqref{eq:DL:JointCond_3}, we employ the random-cluster representation $R_{\L_-^n}$ of $\mu_{\L_-^n}$ see~\cite[Lemma 5.1]{higuchi_2004}. In words, $R_{\L_-^n}$ is a measure on  $\{0,1\}^{\L_-^n}$ in which sites interact via the number of (gray) clusters. In particular, since $\mu_{\L^n_-}(\s_{x_o}=\s_{x_n}=1)\le 1/2$, 
\begin{equation*}
\begin{split}
0\le 1/2-\mu_{\L^n_-}(\s_{x_o}=\s_{x_n}=1)\le R_{\L^n_-}(x_o\not\leftrightsquigarrow x_n)/2,
\end{split}
\end{equation*}
where $\{x_o\not\leftrightsquigarrow x_n\}$ denotes the event that $x_o$ and $x_n$ do not belong to the same cluster, which is a decreasing event with respect to the natural partial ordering on $\{0,1\}^{\Z^d}$. In particular, note that the random-cluster measure $R_{\L_-^n}$ is bounded from below by a Bernoulli field $P$ on $\{0,1\}^{\Z^d}$, via Holley's inequality, see~\cite[Lemma 5.4]{higuchi_2004}, with a parameter that tends to one as $p$ tends to $1/2$. Hence, $R_{\L^n_-}(x_o\not\leftrightsquigarrow x_n)\le P(x_o\not\leftrightsquigarrow x_n)$ and $P(x_o\not\leftrightsquigarrow x_n)$ tends to $1-\theta^2$ as $n$ tends to infinity, where $\theta$ denotes the percolation probability. However, $\theta$ tends to one as $p$ tends to $1/2$. In a similar fashion, 
\begin{equation*}
\begin{split}
0\le\mu_{\L^n_-}(\s_{x_o}=-\s_{x_n}=1)\le R_{\L^n_-}(x_o\not\leftrightsquigarrow x_n)/2,
\end{split}
\end{equation*}
where the right-hand side tends to $1-\theta^2$ as $n$ tends to infinity and $\theta$ tends to one as $p$ tends to $1/2$. Using this, we arrive at 
\begin{equation*}
\begin{split}
\liminf_{n\uparrow \infty}\Cov_n\big(\1\{\s_{x_o}\s_{y_o}=1\};\1\{\s_{x_n}\s_{y_n}=1\}\big)\ge 1/4-\eps(p),
\end{split}
\end{equation*}
for some function $p\mapsto \eps(p)$ with $\lim_{p\uparrow 1/2}\eps(p)=0$, as desired. 

\medskip
{\bf Step 5:} Now, in order to finish the proof of the statement~\eqref{eq:DL:JointCond_4}, note that by bounding indicator functions by one, all the remaining terms can be bounded by $\mu_{\L_-^n}(\sigma_{x_o}=0)$. 
In order to show that $\mu_{\L_-^n}(\sigma_{x_o}=0)$ tends to zero as $p$ tends to $1/2$, we again invoke the random-cluster representation $R_{\L_-^n}$ and the dominating Bernoulli field $P$ and note that 
\begin{equation*}
\begin{split}
\mu_{\L_-^n}(\sigma_{x_o}=0)=R_{\L_-^n}(\o_{x_o}=0)\le P(\o_{x_o}=0)
\end{split}
\end{equation*}
since $\{\o_{x_o}=0\}$ is a decreasing event. However, $P(\o_{x_o}=0)$ tends to zero as $p$ tends to $1/2$ and this finishes the proof that $\eta_{o^c}\mapsto K(\s_o=1\vert \eta^\tsup{1}_{o^c})$ is not continuous at $\zeta^\tsup{1}$ with $\zeta=\L\cup\{o\}$. 

\medskip
{\bf Step 6:} As a final step in this part of the proof, we need to lift the discontinuity to all specifications $\g$ that satisfy the DLR equations with respect to $K$. For this, it suffices to prove that $\zeta^\tsup{1}$ with $\zeta=\L\cup\{o\}$ is also a point of discontinuity of $\bfeta\mapsto \g_o(\s_o=1\vert \bfeta_{o^c})$. Let us assume the contrary, namely that
\begin{equation*}
\begin{split}
\lim_{\Delta\uparrow\Z^d}\sup_{\bfeta\colon \bfeta_\Delta=\zeta^\tsup{1}_\Delta}\abs{\g_o(\s_o=1\vert \bfeta_{o^c})-\g_o(\s_o=1\vert \zeta^\tsup{1}_{o^c})}=0.
\end{split}
\end{equation*}
Then, by a version of the Lebesgue differentiation lemma, see for example~\cite[Lemma 3.6]{JaKo22}, we have that 
\begin{equation*}
\begin{split}
\lim_{n\uparrow \infty} \frac{1}{K(A_n)}\int_{A_n}K(\d \bfeta)\g_o(\s_o=1\vert \bfeta) =\g_o(\s_o=1\vert \zeta^\tsup{1}_{o^c}), 
\end{split}
\end{equation*}
where 
\begin{equation*}
    \O\supset A_n=\{(\eta,\s_\eta)\colon \s_{\zeta^n}=1\text{ and } \eta_i=u\text{ for all }i\in\partial\zeta^n\}
\end{equation*}
is the set of configurations that have a cluster $\zeta^n$ of color one that is surrounded by unoccupied sites and that are otherwise unspecified. Note that $A_n$ is a Borel set and $K(A_n)>0$ for all $n\ge 1$.

However, due to the DLR equations and the locality of canonical conditional expectation from Step 1, 
\begin{equation*}
\begin{split}
\frac{1}{K(A_n)}\int_{A_n}K(\d \bfeta)\g_o(\s_o=1\vert \bfeta)=\frac{1}{K(A_n)}\int_{A_n}K(\d \bfeta)K(\s_o=1\vert \bfeta)=K(\s_o=1\vert \zeta_{o^c}^{n,\tsup{1}})
\end{split}
\end{equation*}
and the same holds for $\zeta^{n,\tsup{1}}$ replaced by $\zeta'^{n,\tsup{1}}$. Hence,                 
\begin{equation*}
\begin{split}
\lim_{n\uparrow \infty}\big(\g_o(\s_o=1\vert \zeta^{n,\tsup{1}}_{o^c})-K(\s_o=1\vert \zeta_{o^c}^{n,\tsup{1}})\big)=0=\lim_{n\uparrow \infty}\big(\g_o(\s_o=1\vert \zeta'^{n,\tsup{1}}_{o^c})-K(\s_o=1\vert \zeta'^{n,\tsup{1}}_{o^c})\big). 
\end{split}
\end{equation*}
But by the previous steps, $\liminf_{n\uparrow \infty}\big|K(\s_o=1\vert \zeta'^{n,\tsup{1}}_{o^c})-K(\s_o=1\vert \zeta_{o^c}'^{n,\tsup{1}})\big|>0$
and thus also 
$\liminf_{n\uparrow \infty}\big|\g_o(\s_o=1\vert \zeta^{n,\tsup{1}}_{o^c})-\g_o(\s_o=1\vert \zeta'^{n,\tsup{1}}_{o^c}))\big|>0$,
a contradiction. This finishes this part of the proof.

\medskip
{\bf Step 7:} It remains to prove the quasilocal Gibbsianness for all sufficiently small $p_+,p_-$. For this, note that it suffices to check the continuity for the discrete Papangelou intensity $\bfeta\mapsto \rho_o(\bfeta_o,\bfeta_{o^{\rm c}})$ as presented in~\eqref{eq:Disc_Pap} and repeated here for convenience,
\begin{equation*}
\begin{split}
\bfeta\mapsto \rho_o(\bfeta_o,\bfeta_{o^{\rm c}})=\frac{K(\bfeta_o|\bfeta_{o^{\rm c}})}{K(u_o|\bfeta_{o^{\rm c}})}
=\Biggl(\frac{q}{1-q}\prod_{i\in\eta\colon \abs{i}=1}\1\{\sigma_i\sigma_o\neq -1\}\Biggr)
\frac{Z_{\eta_\tsup{o}}}{Z_{\eta_\tsup{o}\cup\{o\}}}
\end{split}
\end{equation*}
with $\bfeta_o\in \{-1,0,1\}$ and already assuming that the boundary condition is feasible. Indeed, we can factorize $K(\bfeta_\L|\bfeta_{\L^{\rm c}})$ into finitely many single-site terms, use that $K(u_o|\bfeta_{o^{\rm c}})$ is uniformly bounded away from zero and translation invariance. Now, in order to see the continuity at small $p_+,p_-$, we note that the inverse fraction 
of partition functions can be rewritten as the WR expectation in the volume 
$\eta_\tsup{o}$ 
of the function $h(\bfeta_{\partial o})=\sum_{\s_o=+1,-1,0}p_{\s_o}\prod_{i\in\eta\colon \abs{i}=1}\1\{\sigma_i\sigma_o\neq -1\}$ depending on the spins at the boundary of the origin, i.e., 
\begin{equation*}
\begin{split}
&\frac{Z_{\eta_\tsup{o}\cup\{o\}}}{Z_{\eta_\tsup{o}}}
=\mu_{\eta_\tsup{o}}\big(h(\bfeta_{\partial o})\big).
\end{split}
\end{equation*}
The dependence of this quantity on the change of the volume $\eta_\tsup{o}$ 
can now be estimated by standard uniqueness techniques like the Dobrushin criterion. In particular, for all sufficiently small $p_+,p_-$ the WRM is in the uniqueness regime, which implies continuity of $\eta\mapsto \mu_{\eta_\tsup{o}}\big(h(\bfeta_{\partial o})\big)$. 
\end{proof}

\begin{proof}[Proof of Proposition~\ref{prop_Pap_1}]
First note that $\rho$ is clearly a graph function as defined above. We claim that this is also a Papangelou intensity for $\K$.
Indeed, by the Mecke theorem for PPPs,
\begin{equation*}
\begin{split}
\int&\d\bfx\int \K(\d \bfeta)f(\bfx,\bfeta\cup\{\bfx\}) \rho(\bfx,\bfeta)\\
  &=\int\d\bfx\int \Pi(\d \eta)\sum_{\s_\eta}\mu_\eta(\s_\eta)f\big((x,\s_x),(\eta\cup\{x\},\s_{\eta\cup\{x\}})\big)\rho\big((x,\s_x),(\eta,\s_\eta)\big)\\
  &=\beta\int\d x\int \Pi(\d \eta)\sum_{\s_{\eta\cup\{x\}}}\mu_{\eta\cup\{x\}}(\s_{\eta\cup\{x\}})f\big((x,\s_x),(\eta\cup\{x\},\s_{\eta\cup\{x\}})\big)\\
&=\int \Pi(\d \eta)\sum_{x\in\eta}\sum_{\s_\eta}\mu_{\eta}(\s_{\eta})f\big((x,\s_x),(\eta,\s_{\eta})\big)\\
&=\int \K(\d \bfeta) \sum_{x\in \eta}f(\bfx,\bfeta),
\end{split}
\end{equation*}
as required. 
\end{proof}

\begin{proof}[Proof of Theorem~\ref{thm_2}]
We denote again $p=p_+=p_-$. 

\medskip
{\bf Step 1:} 
As a first part, we consider the canonical Papangelou intensity given by the conditional WRM measure at the origin $o\in \R^d$ for some boundary condition. To be more precise, let $o\in \eta$ be a configuration such that the cluster of the origin in $\eta$ is finite. Then, using the independence of the WRM with respect to disjoint clusters in $\eta$, we have that
\begin{equation}\label{eq:PGG:JointCond}
\begin{split}
\frac{\mu_{\eta_\tsup{o}\cup\{o\}}(\s_{\eta_\tsup{o}}1_o)}{\mu_{\eta_\tsup{o}}(\s_{\eta_\tsup{o}})}&=p\prod_{i\in\eta_\tsup{o}\colon i\sim o}\1\{\sigma_i\neq -1\}\frac{Z_{\eta_\tsup{o}}}{Z_{\eta_\tsup{o}\cup\{o\}}},
\end{split}
\end{equation}
where again, $Z_{\eta_\tsup{o}}$ is the partition function in the definition of the WRM where the domain is only the cluster of the origin in $\eta$ but evaluated omitting $o$, and $\s_{\eta_\tsup{o}}1_o$ is the coloring of $\eta_\tsup{o}\cup\{o\}$ where $o$ is assigned the color $+1$. 

Now we can use the same construction as in the discrete part to see that the (rescaled) lattice configuration $\zeta=(1-\eps)\zeta'$, where $\eps>0$ is sufficiently small (to retain the usual lattice neighborhood structure) and $\zeta'$ is the lattice configuration from the proof of Theorem~\ref{thm_1} (there it is called $\zeta$), is a point of discontinuity for 
\begin{equation*}
\begin{split}
\eta\mapsto Z_{\eta_{\tsup{o}}}/Z_{\eta_{\tsup{o}}\cup\{o\}}.
\end{split}
\end{equation*}
The rescaling is necessary since we assumed that the connectivity threshold for the underlying PGG is $1$.

\medskip
{\bf Step 2:} Now in order to show the statement for all $p$ we can use the following trick. We consider $\zeta=(1-\eps)\zeta'$ with small $\eps$ as above and attach to each vertex in $\zeta$ a fixed finite set $\kappa\subset B_{\eps/2}(o)$ with $|\kappa|=k$, i.e., we consider the vertex-thickened lattice $\hat\zeta=\zeta\oplus\kappa$, see Figure~\ref{fig:gilbert_pt_eff} for illustration. 
\begin{figure}[ht]
\begin{minipage}{.55\textwidth}
	\begin{center}
		\includegraphics[width=\textwidth]{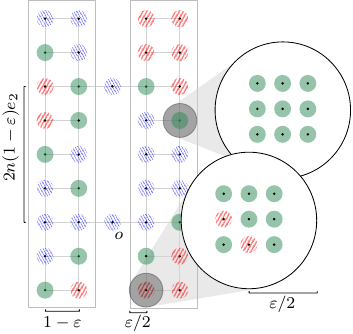}
	\end{center}	
\end{minipage}\hfill
\begin{minipage}{.4\textwidth}
	\caption{A realization of a WRM on the (rescaled) truncated half-lattices for large $p$. In black the vertices of the underlying Gilbert graph. Blue (resp.~green, red) disks represent the $+$ (resp.~$0$, $-$) spins in the WRM. The magnified areas represent the thickening procedure of Step 2, for $k=9$.}
	\label{fig:gilbert_pt_eff}
\end{minipage}
\end{figure}
We highlight that, for sufficiently small $\eps$, the points in $\hat x:=\kappa+x$ are only connected to other points in $\hat x$ and to points in $\hat y$ if $y$ is a neighbor of $x$ in $\zeta$. Note that the thickened vertex set $\hat x$ acts as a unit as, by the color constraint, all non-$0$ colored points in $\hat x$ must have the same WR-color $+1$ or $-1$. The probability that $\hat x$ has color $0$ (in the sense that all points in $\hat x$ have color $0$) under the a-priori measure is given by $(1-2p)^k$, which tends to zero as $k$ tends to infinity. Hence, for sufficiently large $k$ we enter again a regime of large (effective) $p$ as in Step 1. Reproducing Steps 2-5 in the proof of Theorem~\ref{thm_1}, but now using the effective spin $\hat\s_{\hat x}$ of the thickened vertice $\hat x$ instead of $\s_x$ gives the result.
 
\medskip
{\bf Step 3:} 
As the third step, we need to lift the discontinuity to all graph Papangelou intensities $\rho'$ that satisfy the GNZ equations with respect to $\K$. We do this here only for the case of large $p$, the general case follows by the same arguments.  
It suffices to prove that any $\bfeta$ based on the discontinuity point $\zeta=(1-\eps)\L$ with all marks set to $1$ and $0<\eps<1/2$, is also a point of discontinuity of $\bfeta'\mapsto \rho'(\bfx, \bfeta')$. 

Let us assume the contrary. Our key tool to relate $\rho'$ to $\rho$ are the GNZ equations. We want to employ them for a well-chosen sequence of functions $f_n$ that concentrate around $1_o\zeta^\tsup{1}$. For this let
\begin{equation*}
\begin{split}
f_n^{\zeta^{n,\tsup{1}}}(\bfx,\bfeta)=&\frac{1}{|B_{1/n}(o)|}\one\{\bfx=(x,\eta_x)\colon x\in B_{1/n}(o), \eta_x=1\}\\
&\times \frac{1}{\K(B_{1/n}(\zeta^{n,\tsup{1}}))}\one\{\bfeta\in B_{1/n}(\zeta^{n,\tsup{1}})\},
\end{split}
\end{equation*}
where $|B_r(x)|$ denotes the Lebesgue measure of the open ball with radius $r$ centered at $x$ and $B_r(\zeta^{n,\tsup{1}})$ denotes the event that for each $x\in \zeta^n$ there is precisely one point in $y\in \o$ at distance at most $r$ from $x$ and that $\s_y=1_x$ and additionally that $\zeta^n$ is isolated, i.e., that no other point is in distance less than $1+a$ to $\zeta^n$. Since $\K$ is based on the Poisson point processes, we have that  $\K(B_{1/n}(\zeta^{n,\tsup{1}}))>0$ for all $n\in \N$. 

Then, since we assume that $(1_o,\zeta^\tsup{1})$ is a continuity point and $\rho'$ is a graph function, we have that 
\begin{equation*}
\begin{split}
    \rho'(1_o, \bfzeta_{o^c})&=\lim_{n\uparrow\infty}\int\d \bfx\int \K(\d \bfeta)f_n^{\zeta^{n,\tsup{1}}}(\bfx,\bfeta) \rho'(\bfx,\bfeta)\\
    &=\lim_{n\uparrow\infty}\int\d \bfx\int \K(\d \bfzeta)f_n^{\zeta^{n,\tsup{1}}}(\bfx,\bfeta) \rho(\bfx,\bfeta),
\end{split}
\end{equation*}
where we used the GNZ equations twice as well as the definition of quasilocality. The same holds for $f_n^{\zeta'^{n,\tsup{1}}}$. However, since $(1_o,\zeta^\tsup{1})$ is a point of discontinuity for $\rho$, we have that 
\begin{equation*}
\begin{split}
0&=\Big|\lim_{n\uparrow\infty}\int\d \bfx\int \K(\d \bfeta)f_n^{\zeta^{n,\tsup{1}}}(\bfx,\bfeta) \rho(\bfx,\bfeta)-\lim_{n\uparrow\infty}\int\d \bfx\int \K(\d \bfeta)f_n^{\zeta'^{n,\tsup{1}}}(\bfx,\bfeta) \rho(\bfx,\bfeta)\Big|\\
&\ge \liminf_{n\uparrow\infty}|\rho(1_o,\zeta^{n,\tsup{1}})-\rho(1_o,\zeta'^{n,\tsup{1}})|\\
&\qquad -\Big|\lim_{n\uparrow\infty}\int\d \bfx\int \K(\d \bfeta)f_n^{\zeta^{n,\tsup{1}}}(\bfx,\bfeta)(\rho(\bfx,\bfeta)-\rho(1_o,\zeta^{n,\tsup{1}}))\Big|\\
&\qquad -\Big|\lim_{n\uparrow\infty}\int\d \bfx\int \K(\d \bfeta)f_n^{\zeta'^{n,\tsup{1}}}(\bfx,\bfeta)(\rho(\bfx,\bfeta)-\rho(1_o,\zeta'^{n,\tsup{1}}))\Big|,
\end{split}
\end{equation*}
where $\liminf_{n\uparrow\infty}|\rho(1_o,\zeta^{n,\tsup{1}})-\rho(1_o,\zeta'^{n,\tsup{1}})|>0$ and, 
\begin{equation*}
\begin{split}
&\limsup_{n\uparrow\infty}|\int\d \bfx\int \K(\d \bfeta)f_n^{\zeta^{n,\tsup{1}}}(\bfx,\bfeta)(\rho(\bfx,\bfeta)-\rho(1_o,\zeta^{n,\tsup{1}}))|=0
\end{split}
\end{equation*}
since, for all sufficiently large $n$, the neighborhood graph of allowed $\eta$ is precisely the same as the one given by $\zeta^n$. This is the desired contradiction.
\end{proof}

In the proofs below of the results of Section~\ref{sec:PBM}, in order to highlight the structural similarities with the previous sections, we will use the following abuse of notation, writing $\bfeta\in\Omega$ for the pair $(\eta,\s_\eta)\in\mathcal X\times\mathcal X^\pm$, $\bfy = (\bar y,\bar\sigma)\in (\R^d\times \R_+)\times \mathcal X^\pm$, and $\bfeta\cup\bfy = (\bar\eta\cup\{\bar y\},\bar\s_{\BM(\bar\eta\cup\{\bar y\})})$.

\begin{proof}[Proof of Proposition~\ref{prop_2}]
Using the Mecke theorem, we can directly calculate for all non-negative test functions $f\colon ((\R^d\times\R_+)\times \mathcal X^\pm)\times (\mathcal X\times\mathcal X^\pm) \to [0,\infty)$,
\begin{equation*}
\begin{split}
    \int&\d\bfy\int\mathbb{K}(\d\bfeta)f(\bfy,\bfeta\cup\{\bfy\})\rho(\bfy,\bfeta)\\
&=\beta\int\d\bar y\int\Pi^\pm_{\BM(\bar y)}(\d\bar\s')\int\Pi(\d\bar\eta)\frac{1}{Z_{\BM(\bar\eta\cup\{\bar y\})}}\int\Pi^\pm_{\BM(\bar\eta)}(\d\bar\s)\\
&\hspace{1.5cm}\one\{\bar\s_{\BM(\bar\eta)}\bar\s'_{\BM(\bar y)\setminus\BM(\bar y)}\text{ is feas.}\} \chi_{\bar y,\bar\eta_\tsup{y}}(\bar\s) f\big((\bar y,\bar\s'),(\bar\eta\cup\{\bar y\},\bar\s_{\BM(\bar\eta)}\bar\s'_{\BM(\bar y)})\big)\\
    &= \beta\int\d\bar y\int\Pi(\d\bar\eta)\frac{1}{Z_{\BM(\bar\eta\cup\{\bar y\})}} 
    \int\Pi^\pm_{\BM(\bar\eta\cup\{\bar y\})}(\d\bar\s)\one\{\bar\s_{\BM(\bar\eta\cup\{\bar y\})}\text{ is feas.}\} \\
&\phantom{=\beta\int\d\bfy\int\Pi(\d\bar\eta)\frac{1}{Z_{\BM(\bar\eta\cup\{\bar y\})}}\int\Pi^\pm_{\BM(\bar\eta\cup\{\bar y\})}()} f\big((\bar y,\bar\s_{\BM(\bar y)}),(\bar\eta\cup\{\bar y\},\bar\s_{\BM(\bar\eta\cup\{\bar y\})})\big)\\
    &\stackrel{\mathclap{\text{(Mecke)}}}{=}\  \int \Pi(\d\bar\eta)\frac{1}{Z_{\BM(\bar\eta)}}\int\Pi^\pm_{\BM(\bar\eta)} \one\{\bar\s_{\BM(\bar\eta)}\text{ is feas.}\} \sum_{\bar y\in\bar \eta} f(\bfy,\bfeta),
\end{split}
\end{equation*}
which verifies~\eqref{eq:PBM:GNZ_3}.
Moreover, $\rho$ is locally robust, since, for $\BM(\bar\eta_\tsup{\bar y})$ finite in $\BM(\bar\eta)$, we can find $\eps_o>0$ such that for all $\eps<\eps_o$ and $\bar\eta'\in B_\eps(\bar\eta_\tsup{\bar y},\bar\eta)$ also $\BM(\bar\eta'_\tsup{\bar y})$ bounded and in particular disconnected from the BM based on the remaining points $\bar\eta\setminus \bar\eta_\tsup{\bar y}$. In this case 
\begin{align*}
&\abs{\rho(\bar y^\tsup{1},\eta'^{\tsup{1}}) - \rho(\bar y^\tsup{1},\eta^\tsup{1})}\\
&=\beta\abs{\frac{Z_{\BM(\bar\eta'_\tsup{\bar y})}}{Z_{\BM(\bar\eta'_\tsup{\bar y}\cup\bar y)}}\, \e^{(\lambda_++\lambda_-)\abs{\BM(\bar y)\cap \BM(\bar\eta'_\tsup{\bar y})}}-\frac{Z_{\BM(\bar\eta_\tsup{\bar y})}}{Z_{\BM(\bar\eta_\tsup{\bar y}\cup\bar y)}}\, \e^{(\lambda_++\lambda_-)\abs{\BM(\bar y)\cap \BM(\bar\eta_\tsup{\bar y})}}}.
\end{align*}
Now, note that $\abs{\abs{\BM(\bar\eta'_\tsup{\bar y})}-\abs{\BM(\bar\eta_\tsup{\bar y})}}\le\abs{\BM(\bar\eta^\eps_\tsup{\bar y})\setminus\BM(\bar\eta_\tsup{\bar y})}$, 
where $\bar\eta^\eps_\tsup{\bar y}=\{( x,m_x+2\eps)\colon (x,m_x)\in\bar\eta_\tsup{\bar y}\}$, which 
tends to zero as $\eps$ tends to zero. Using this and introducing indicators for the events that no Poisson points lie in the set differences of the Boolean models, we can estimate
\begin{align*}
\abs{Z_{\BM(\bar\eta'_{\bar y})}-Z_{\BM(\bar\eta_{\bar y})}}&=\abs{\int_{\BM(\bar\eta'_{\bar y})}\Pi^\pm(\d \bar\s')\one\{\bar\s' \text{ is feas.}\}-\int_{\BM(\bar\eta_{\bar y})}\Pi^\pm(\d \bar\s)\one\{\bar\s \text{ is feas.}\}}\\
&\le (1-\e^{-\lambda_\pm\abs{\BM(\bar\eta'_{\bar y})\setminus \BM(\bar\eta_{\bar y})}})+(1-\e^{-\lambda_\pm\abs{\BM(\bar\eta_{\bar y})\setminus \BM(\bar\eta'_{\bar y})}})\\
&\qquad+\abs{\e^{-\lambda_\pm\abs{\BM(\bar\eta_{\bar y})\setminus \BM(\bar\eta'_{\bar y})}}-\e^{-\lambda_\pm\abs{\BM(\bar\eta'_{\bar y})\setminus \BM(\bar\eta_{\bar y})}}},
\end{align*}
where $\lambda_\pm=\lambda_++\lambda_-$, which tends to zero uniformly. Estimating the remaining terms in a similar fashion gives the result. 
\end{proof}

\begin{proof}[Proof of Theorem~\ref{thm:PBM:nG}]
We again denote $\lambda=\lambda_+=\lambda_-$.

\medskip
{\bf Step 1:} We show that the Papangelou intensity in \eqref{eq:PBM:pap} is not quasilocal.
As in the previous sections, the discontinuity already comes from a comparison of partition functions of the form 
\begin{align*}
  &\frac{Z_{\BM(\bar\eta_\tsup{\bar o})}}{Z_{\BM(\bar\eta_\tsup{\bar o}\cup\{\bar o\})}}
\end{align*}
for a well-chosen marked point cloud $\bar\eta$. Let us note that the additional part $\chi_{\bar x, \bar\eta_{\tsup{\bar x}}}$ may also give rise to discontinuities, but only if the radii are unbounded, something that we do not require in general. In order to see that fraction of partition functions give rise to a discontinuity, we use that the marks can be chosen arbitrarily small. We proceed as follows.
\begin{figure}
\begin{minipage}{.4\textwidth}
	\begin{center}
		\includegraphics[width=.7\textwidth]{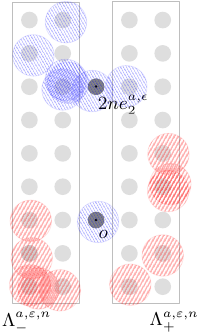}
	\end{center}	
\end{minipage}\hfill
\begin{minipage}{.6\textwidth}
	\caption{A realization of a WRM on the truncated half-lattices $\L_\pm^{a,\eps,n}$ (in grey). clusters of a PBM, represented in gray. In blue (resp.~red) the $+1$ (resp.~$-1$) marks of the WRM.}
	\label{fig:clusters_4}
\end{minipage}
\end{figure}
Consider the, by now, well-known truncated half-lattices $\L_\pm^n$ from the proof of Theorem~\ref{thm_1}. Rescaling by $2(a-\eps)$ for some small $\eps>0$ creates half-lattices $\L_\pm^{a,\eps,n}$ with the following property:

Two points $b,c$ are neighbors in $\L^{a,\eps,n}:=\L_-^{a,\eps,n}\cup \L_+^{a,\eps,n}$ if and only if, for any $x\in B_{\eps}(b)$ and $y\in B_{\eps}(c)$, we have that $B_a(x)\cap B_a(y)\neq\emptyset$. In other words, if the environment $\bar\zeta^n$ is given attaching marks $\eps$ to every point of $\zeta^n:=\L^{a,\eps,n}\cup\{o\}$, the associated Boolean model $\BM^a(\bar\zeta^n)$ has a single component. For every $x\in\zeta^n$, its ball $B_\eps(x)$ can carry no WR points, in which case it is empty, or a positive number of WR points, which then must be of the same color, see Figure~\ref{fig:clusters_4} for an illustration, where $e_2^{a,\eps}=2(a-\eps)e_2$ denotes the rescaled unit vector $e_2$. In particular, by construction, neighboring lattices points with a positive number of WR points in their $\eps$-balls force the colors of the WR points to be equal. Considering then $\zeta'^n := \zeta^n\cup\{2n e_2^{a,\epsilon}\}$ yields precisely the situation of Section~\ref{sec:DL} with $p=(1-\exp(-\lambda |B_\eps(o)|))/2$ and hence leading to the same discontinuity.

Setting all colorings of $\bar\s_{\BM(\bar\zeta)}$ to plus, and denoting such a configuration by $\zeta^\tsup{1}$, the discontinuity of $Z_{\BM(\bar\eta_\tsup{\bar o})}/Z_{\BM(\bar\eta_\tsup{\bar o}\cup\{\bar o\})}$ carries over to a discontinuity at $(\bar o^\tsup{1},\zeta^\tsup{1}\setminus\{\bar o^\tsup{1}\})$ of the map
\begin{equation}\label{eq:PBM}
    (\bfy,\bfeta) \mapsto \frac{Z_{\BM(\bar\eta_\tsup{\bar y})}}{Z_{\BM(\bar\eta_\tsup{\bar y}\cup\{\bar y\})}}\one\{(\bar\s_{\BM({\rm n}(\bar y,\bar\eta)\cup\{\bar y\})}) \text{ is feasible}\}.
\end{equation}

\medskip
{\bf Step 2:} We lift the discontinuity from Step 1 to all (locally robust) Papangelou intensities $\rho'$ that satisfy~\eqref{eq:PBM:GNZ_3} with respect to $\mathbb K$. 

It suffices to prove that $\zeta^\tsup{1}$ is also a point of discontinuity for $\rho'$. 
Let us assume the contrary. 
We want to employ~\eqref{eq:PBM:GNZ_3} to connect $\rho'$ and $\rho$ via the following test function
\begin{align*}
    f_n^{\zeta^{n,\tsup{1}}}(\bfeta)&=\frac{\one\{\exists \bar y\in\bar\eta\colon y\in B_{1/n}(o), m_{y}\in B_{1/n}(\eps)\}}{|B_{1/n}(o)|\nu(B_{1/n}(\eps))}\frac{\one\{\exists \bar x\in\bar\s_{\BM(\bar y)}\colon x\in B_{1/n}(o),\s_{x}=1\}}{|B_{1/n}(o)|}\\
    &\qquad\times\frac{\one\{\bar\eta\in B_{1/n}(\bar\zeta^n)\}}{\Pi(B_{1/n}(\bar\eta^n))}\frac{\one\{\bar\s\in B_{1/n}(\bar\s^n)\}}{\Pi^\pm_{\BM(\bar\eta')}(B_{1/n}(\bar\s^n))},
\end{align*}
where $\bar\s^n$ is some reference configuration compatible with $\bar\zeta^n$ in which all marks are set to $+$. 
Heuristically, $f_n^{\zeta^{n,\tsup{1}}}$ assigns positive weight only to those configurations that are close to $\zeta^{n,\tsup{1}}$; all indicators are compensated by the corresponding probability weights under $\mathbb K$ and the Lebesgue and mark measures, making $f_n^{\zeta^{n,\tsup{1}}}$ a probability density. 

Now, using the fact that $\rho'$ is assumed to be quasilocal, we have that
\begin{equation*}
\begin{split}
    \rho'(\bfo,\bfeta\setminus \bfo)
    &=\lim_{n\uparrow\infty}\int\d \bfy \int \mathbb K(\d\bfeta)f_n^{\zeta^{n,\tsup{1}}}(\bfeta\cup\{\bfy\}) \rho'(\bfy,\bfeta)\\
    &=\lim_{n\uparrow\infty}\int\d \bfy \int \mathbb K(\d\bfeta)f_n^{\zeta^{n,\tsup{1}}}(\bfeta\cup\{\bfy\}) \rho(\bfy,\bfeta)=\rho(\bfo, \bfeta\setminus \bfo).
\end{split}
\end{equation*}
However, $\zeta^\tsup{1}$ is a point of discontinuity for $\rho$ and hence also for $\rho'$. This is the desired contradiction. 

\medskip
{\bf Step 3:} In order to see that for sufficiently small $\lambda_+=\lambda_-$ we reenter a regime of quasilocality, first note that, under the bounded-radii condition, the function $\bar\eta\mapsto\chi_{\bar x, \bar\eta_\tsup{\bar x}}$ is quasilocal for all $\bar x$ since it is local. Furthermore, since also the indicator $\one\{\bar\s'_{\BM(\bar x)}\bar\s_{\BM({\rm n}(\bar x,\bar \eta))\setminus \BM(\bar x)} \text{ is feasible}\}$ is quasilocal under the bounded-radii condition, it suffices to show the continuity of the ratio of partition functions $\bfeta\mapsto Z_{\BM(\bar\eta_\tsup{\bar o})}/Z_{\BM(\bar\eta_\tsup{\bar o}\cup\{\bar o\})}$. 

For this, as in Step 7 of the proof of Theorem~\ref{thm_1}, note that we can write the ratio $Z_{\BM(\bar\eta_\tsup{\bar o}\cup\{\bar o\})}/Z_{\BM(\bar\eta_\tsup{\bar o})}$ as an expectation of a local function with respect to the WRM $\mu_{\BM(\bar\eta_\tsup{\bar y})}$. Now, the random-cluster representation of the symmetric WRM is dominated by a Poisson--Boolean model with intensity $\lambda_++\lambda_-$ and the same interaction threshold $a$. More precisely, note that the WR density (i.e., the normalized feasibility indicator) is decreasing with respect to the addition of points. On the other hand, the percolation event of existence of a connected component that connects $\BM(\bar y)$ with $B_{r}^{\rm c}(o)$, for $r>0$ large and on $\BM(\bar\eta_\tsup{\bar y})$, is increasing under the addition of points. Thus, using the FKG inequality with respect to the PPP with intensity $\lambda_++\lambda_-$ (that underlies the random-cluster representation), the probability of the percolation event under the random-cluster representation of the WRM is bounded from above by the probability of the percolation event under the (unconstrained) PPP with the same intensity, see~\cite[Appendix]{JK17}. 

Furthermore, if $\lambda_++\lambda_-$ is sufficiently small, the probability of the percolation event under the PPP tends to zero as $r$ tends to infinity on the whole domain $\R^d$ and hence also on all subdomains of the form $\BM(\bar\eta_\tsup{\bar y})$. But this implies absence of percolation in the random-cluster representation of the WRM also on any subdomain and hence the WRM model is in the uniqueness regime. This implies quasilocality.
\end{proof}

\section{Appendix}\label{sec:ap}
The formalism used in Section~\ref{sec:PBM} is convenient since it allows us to present a canonical version of the Papangelou intensity in a simple form corresponding well to the canonical Papangelou intensities of the other models in this manuscript. However, the resulting process under $\mathbb K$ is not a marked point process, since every point $x$ carries an iid radius mark $m_x$, but then a whole cluster $\BM(\bar\eta^\ssup{k})$ carries a joint mark $\bar\s_{\BM(\bar\eta^\ssup{k})}$ which is not clearly coming from the individual points in $\eta^\ssup{k}$. 

\subsection{Removal of overlaps}
\begin{figure}[h]
\begin{minipage}{.5\textwidth}
    \begin{center}
	\includegraphics[width=.8\textwidth]{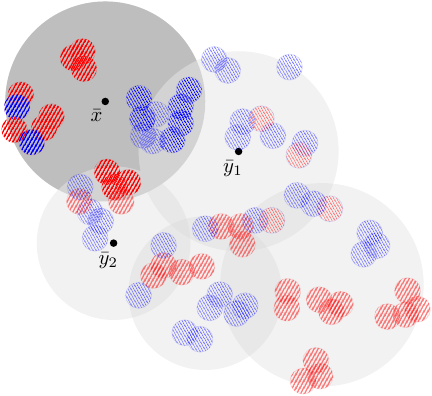}
    \end{center}	
\end{minipage}\hfill
\begin{minipage}{.5\textwidth}
    \begin{center}
	\includegraphics[width=.8\textwidth]{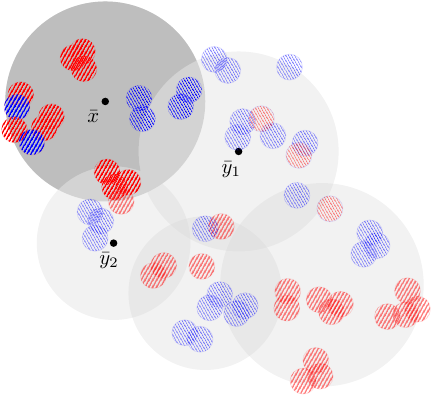}
    \end{center}
\end{minipage}  
\caption{Colored points on the cluster of a PBM (in gray). On the left, a realization of $\bfPi$ where overlapping regions have too many points; on the right a realization of $\bfPi^\ssup{1}$. Note the different intensities in the overlap regions.}
    \label{fig:PBM:removal}
\end{figure}

We start by constructing a suitable reference measure based on an iid-marked PPP that includes the colored points but has no color-dependent interaction. There is however the necessity to compensate the possible overcounting of colored points in overlapping environment regions. To be more precise, recall the a-priori measure introduced in Section~\ref{sec:PBM}, which is given by an iid marked  PPP $\bfeta$ where we attach to every point $x\in\eta$ the mark $(m_x,\bar\s_{\ssup x})\in\mathcal M=\R_+\times\mathcal X^\pm$. In words, the mark consists of the radius $m_x$ and a two-color marked Poisson point cloud $\bar\s=\bar\s_\ssup{x}-x$. The marks are iid distributed via the probability measure $\nu(\d m)\Pi^\pm_{\BM((o,m))}(\d\bar\s)$ and we write $\bfx$ for a marked point in the iid marked PPP $\bfeta$ with distribution $\bfPi(\d\bfeta)$.
If the balls do not overlap, the configuration of WR-particles follows the associated distribution in $\mathbb K$. Otherwise, it creates
dependencies between the marks 
that have to be taken care of. 
Note that the dependencies are due to non-empty overlaps $B_{m_x}(x)\cap B_{m_y}(y)$ between the balls associated to two points $x,y\in \eta$ since, in the overlap, the independent a-priori measure places too many colored particles. More precisely, in the overlap $B_{m_x}(x)\cap B_{m_y}(y)$, the colored particles (coming from the WR-part of the marks of $x$ and $y$) are still a (two-color marked) Poisson point process, but now with twice the intensity. More generally, the union of the colored points $\bigcup_{x\in \eta}\bar\s_\ssup{x}$ has too many points in non-empty overlapping regions, compared to the a-priori measure underlying~\eqref{eq:PBM:joint}, see also Figure~\ref{fig:PBM:removal}. This phenomenon does not occur in the model of Section~\ref{sec:PGG} as environment points do not overlap there. In particular, in view of the GNZ approach, adding a point 
$\bfx=(x,(m_x,\bar\s_\ssup{x}))=(\bar x,\bar\s_\ssup{x})$ to a configuration $\bfeta$,
i.e., considering $\bfeta\cup\{\bfx\}$, would also potentially lead to too many points in $\BM(\bar x)\cap \BM(\bar\eta)$ (in addition to them also having to satisfy the WR constraint).
In order to resolve this overlap issue, the question is how to decide to which environment point a WR-particle 
drawn from the given joint measure in an overlap of two or more environment balls will be associated? Conversely, 
how to choose compound mark intensities such that they reproduce the given joint measure? For example, one can devise a random algorithm to remove all but one configuration of marks in the overlapping region:
To each point $x$ of the environment configuration $\eta$, we assign an additional \emph{choice variable} given by a uniform random variable $w_x\in [0,1]$; whenever two (or more) balls overlap the colored WR-points in the overlap area are assigned to the environment point with the lower choice variable. For example, if $B_{m_x}(x)\cap B_{m_y}(y)\neq \emptyset$, then the colored points are assigned to $x$ if $w_x<w_y$.
We now detail this procedure.
Let us define 
\begin{equation*}
\bfPi^\ssup{1}(\d\bfeta):=\bfPi(\d\bfeta)\chi(\bfeta),
\end{equation*}
where $\chi(\bfeta):=\prod_{x\in\eta}\chi(\bfx,\bar\eta)$ with
\begin{equation}\label{eq:PBM:joint3}
\begin{split}
\chi(\bfx,\bar\eta):=\int \Leb(\d w_{\eta\cup\{x\}})\one\{(\s_{\ssup{x}}^+\cup\s_{\ssup{x}}^-)\cap (R((\bar x,w_x),(\bar\eta,w_\eta))-x)=\emptyset\}\\
\times T^{-1}\big((\bar x,w_x),(\bar\eta,w_\eta)\big).
\end{split}
\end{equation}
Here,  $\Leb(\d w_{\eta\cup\{x\}})=\bigotimes_{y\in \eta\cup\{x\}}\Leb_{[0,1]}(\d w_y)$ is the independent superposition of uniform distributions on $[0,1]$ (which serve as choice variables) and 
$$R((\bar x,w_x),(\bar\eta,w_\eta))=\BM(\bar x)\cap \bigcup_{y\in\eta\colon w_y<w_x}\BM(\bar y)$$ 
represents the intersection of the ball $B_{m_x}(x)$ with all other balls $B_{m_y}(y)$ in $\eta$ corresponding to points with smaller choice variables. This construction ensures that in any region of $\BM(\bar\eta)$ we remove all but precisely one coloring mark, the coloring associated to the Poisson point with smallest choice variable, see Figure~\ref{fig:PBM:removal}. The normalization 
$$T\big((\bar x,w_x),(\bar\eta,w_\eta)\big):=\Pi^\pm\big((\s_{\ssup{x}}^+\cup\s_{\ssup{x}}^-)\cap R((\bar x,w_x),(\bar\eta,w_\eta))=\emptyset\big)$$
 in~\eqref{eq:PBM:joint3} is almost-surely finite by our subcriticality assumption and it further ensures that $\bfPi^\ssup{1}$ is still a probability measure. From this construction it follows that almost-surely under $\bfPi^\ssup{1}$, the configuration $\bigcup_{x\in\eta}\bar\s_{\ssup{x}}$ has an intensity precisely given by $\lambda_\pm\one\{y\in\BM(\bar\eta)\}\d y$. 

\subsection{The GNZ equations}
Now, we incorporate the WRM constraint, see Figure~\ref{fig:PBM:pap2}, into the previously defined a-priori measure $\bfPi^{\ssup{1}}$ via
\begin{equation*}
\mathbb K^\ssup{1}(\d\bfeta):=\bfPi^\ssup{1}(\d \bfeta)\bar\mu(\bfeta),
\end{equation*}
where $\bar\mu(\bfeta)=\prod_{k\in I}\bar\mu(\bfeta^{\ssup{k}})$ is a product over the clusters in $\BM^a(\bar\eta)$, with 
\begin{equation*}
\bar\mu(\bfeta^{\ssup{k}})=\one\Big\{\bigcup_{x\in\eta^\ssup{k}}\bar\s_{\ssup{x}}\text{ is feasible}\Big\}Z^{-1}(\bar\eta^\ssup{k})
\end{equation*}
with $Z(\bar\eta^\ssup{k})=\int\Pi^\pm_{\BM(\bar\eta^\ssup{k})}(\d\bar\s)\one\big\{\bar\s\text{ is feasible}\big\}$. This verifies Lemma~\ref{lem:Model3}.

\begin{figure}
\begin{minipage}{.45\textwidth}
    \begin{center}
	\includegraphics[width=.9\textwidth]{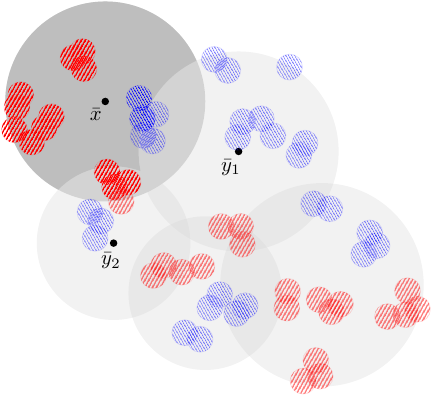}
    \end{center}	
\end{minipage}\hfill
\begin{minipage}{.5\textwidth}
    \caption{A realization of $\mathbb K^{\ssup{1}}$: a WRM on the clusters of a PBM, represented in gray. The blue ($+1$) and red ($-1$) marks are subject to the WR constraint.}
    \label{fig:PBM:pap2}
\end{minipage}    
\end{figure}

\medskip
Let us finally verify the GNZ equation for $\mathbb K^{\ssup{1}}$.
\begin{proposition}\label{prop_3}
The measure $\mathbb K^{\ssup{1}}$ satisfies the GNZ equations with respect to the Papangelou intensity
\begin{equation}\label{eq:PBM:pap3}
\begin{split}
\rho(\bfx,\bfeta)=\beta\frac{\chi(\bfeta\cup\{\bfx\})\bar\mu(\bfeta\cup\{\bfx\})}{\chi(\bfeta)\bar\mu(\bfeta)}\one\{\bar\eta_{\tsup{x}}\text{ is finite}\}.
\end{split}
\end{equation}
\end{proposition}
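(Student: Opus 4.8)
The plan is to use that, by Lemma~\ref{lem:Model3}, it suffices to verify the GNZ equations~\eqref{eq:PBM:GNZ_2} for $\mathbb K=\mathbb K^\ssup{1}$, and that by construction $\mathbb K^\ssup{1}$ is absolutely continuous with respect to the iid-marked PPP $\bfPi$ with density $h(\bfeta):=\chi(\bfeta)\bar\mu(\bfeta)$. For any such absolutely continuous marked point process the Papangelou intensity is the ratio of the density at $\bfeta\cup\{\bfx\}$ to the density at $\bfeta$, multiplied by the intensity $\beta$ of $\bfPi$; this is exactly~\eqref{eq:PBM:pap3}, up to the indicator $\one\{\bar\eta_{\tsup{x}}\text{ is finite}\}$. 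So the argument is a short application of the Slivnyak--Mecke formula, followed by a check that the division defining $\rho$ is legitimate $\mathbb K$-a.s.\ and that the added indicator is $\mathbb K\otimes\d\bfx$-a.s.\ equal to $1$.

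Concretely, I would start from the left-hand side of~\eqref{eq:PBM:GNZ_2} with $P=\mathbb K=h\cdot\bfPi$, write it as $\int\bfPi(\d\bfeta)\,h(\bfeta)\sum_{\bfx\in\bfeta}f(\bfx,\bfeta)$, and apply the Slivnyak--Mecke formula for $\bfPi$ (position intensity $\beta$, mark law $\nu(\d m)\Pi^\pm_{\BM((o,m))}(\d\bar\s)$) to the test function $(\bfx,\bfeta)\mapsto f(\bfx,\bfeta)\,h(\bfeta)$. With the convention $\int\d\bfx\,f(\bfx)=\int\d\bar x\int\Pi^\pm_{\BM((o,m))}(\d\bar\s)\,f((\bar x,\bar\s+x))$ this yields
\begin{equation*}
\int\mathbb K(\d\bfeta)\sum_{\bfx\in\bfeta}f(\bfx,\bfeta)=\beta\int\d\bfx\int\bfPi(\d\bfeta)\,f(\bfx,\bfeta\cup\{\bfx\})\,\chi(\bfeta\cup\{\bfx\})\bar\mu(\bfeta\cup\{\bfx\}).
\end{equation*}
On the other hand, in the right-hand side of~\eqref{eq:PBM:GNZ_2} I would expand $\mathbb K=h\cdot\bfPi$ and insert~\eqref{eq:PBM:pap3}; the factor $h(\bfeta)=\chi(\bfeta)\bar\mu(\bfeta)$ then cancels against the denominator of $\rho$, leaving exactly the expression above, save for the extra factor $\one\{\bar\eta_{\tsup{x}}\text{ is finite}\}$. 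It thus remains to justify the cancellation and to show the indicator is harmless.

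The point that needs care — and the only real obstacle here — is the bookkeeping on null sets. For the indicator: by the standing hypothesis $\beta<\beta_{\rm c}$, the Boolean model $\BM^a(\bar\eta)$ has only finite components for $\bfPi$-a.e.\ $\bar\eta$, and since $\nu[m^d]<\infty$ a single added ball $B_{m_x+a}(x)$ intersects only finitely many of them; hence $\bar\eta_{\tsup{x}}$ is finite for $\bfPi\otimes\d\bfx$-a.e.\ $(\bfeta,\bfx)$, so the indicator can be dropped from the Slivnyak--Mecke side without changing the integral. For the cancellation: one checks that $\chi(\bfeta)>0$ whenever all components of $\BM^a(\bar\eta)$ are finite (with positive probability over the choice variables the relevant overlap region $R$ is empty, and the normalisation $T^{-1}$ is finite and positive there), so that $h(\bfeta)>0$ holds exactly when every WR cluster of $\bfeta$ is feasible, which is $\mathbb K$-a.s.\ the case; thus $\rho$ is well defined $\mathbb K$-a.s.\ and the division is legitimate. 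On the complementary event $\{h(\bfeta)=0\}=\{\bar\mu(\bfeta)=0\}$ both sides of the identity vanish, since $\bar\mu(\bfeta)=0$ forces $\bar\mu(\bfeta\cup\{\bfx\})=0$ (feasibility is hereditary under adding points) and $\mathbb K(\{h=0\})=0$. Measurability of $\chi$, $\bar\mu$, hence of $\rho$, is already part of the construction in Section~\ref{sec:ap}. Combining these observations gives the claim.
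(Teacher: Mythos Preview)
Your proposal is correct and follows essentially the same route as the paper: both expand $\mathbb K=\chi\bar\mu\cdot\bfPi$ and apply the Slivnyak--Mecke formula for the iid-marked PPP $\bfPi$, so that the density $\chi(\bfeta)\bar\mu(\bfeta)$ cancels against the denominator of $\rho$. The paper's proof runs the computation starting from the right-hand side of~\eqref{eq:PBM:GNZ_2} and is terser, silently absorbing the null-set issues (positivity of $h$, harmlessness of the finiteness indicator) that you spell out; your added bookkeeping is accurate and does not change the argument.
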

The difference between the expressions~\eqref{eq:PBM:pap3} and~\eqref{eq:PBM:pap} lies in the handling of the overlaps. The color constraint encoded in the ratio of the $\bar\mu$, respectively the ratio of the partition functions $Z$, is precisely the same and is the one responsible for the possible lack of continuity. Finally note that $\bar\mu$ as well as $\chi$ factorize over connected components in $\BM^a(\bar\eta)$ and hence in the ratio in~\eqref{eq:PBM:pap3} we see cancellations. 
\begin{proof}[Proof of Proposition~\ref{prop_3}]
Using the Mecke formula for the iid marked PPP $\bfPi$, we can calculate for all non-negative measurable test functions $f$,
\begin{equation*}
\begin{split}
    \int\d\bfx&\int\mathbb{K}^{\ssup{1}}(\d\bfeta)\rho(\bfx,\bfeta)f(\bfx,\bfeta\cup\{\bfx\})=\int\d\bfx\int\bfPi(\d\bfeta)\chi(\bfeta)\bar\mu(\bfeta)\rho(\bfx,\bfeta)f(\bfx,\bfeta\cup\{\bfx\})\\
&=\beta\int\d\bfx\int\bfPi(\d\bfeta)\chi(\bfeta\cup\{\bfx\})\bar\mu(\bfeta\cup\{\bfx\})f(\bfx,\bfeta\cup\{\bfx\})\\
&=\int\bfPi(\d\bfeta)\chi(\bfeta)\bar\mu(\bfeta)\sum_{x\in \eta}f(\bfx,\bfeta)\\
&=\int\mathbb{K}^{\ssup{1}}(\d\bfeta)\sum_{x\in \eta}f(\bfx,\bfeta),
\end{split}
\end{equation*}
which verifies the GNZ equation.
\end{proof}

\paragraph{\bf Acknowledgements.}
BJ received support by the Leibniz Association within the Leibniz Junior Research Group on \textit{Probabilistic Methods for Dynamic Communication Networks} as part of the Leibniz Competition (grant no.\ J105/2020). The research of BJ and AZ was funded by Deutsche Forschungsgemeinschaft (DFG) through DFG Project no.~422743078.
BJ is also funded through DFG Project no.~531531795.

\bibliographystyle{alpha}
\bibliography{references.bib}

\begin{thebibliography}{EFHR02}

\bibitem[BKK23]{bergmann2023dynamical}
S.~Bergmann, S.~Kissel, and C.~K{\"u}lske.
\newblock {Dynamical Gibbs--non-Gibbs transitions in Widom--Rowlinson models on trees}.
\newblock {\em Ann. Inst. Henri Poincar\'e Probab. Stat.}, 59(1):325--344, 2023.

\bibitem[BKL84]{bricmont1984structure}
J.~Bricmont, K.~Kuroda, and J.L. Lebowitz.
\newblock {The structure of Gibbs states and phase coexistence for non-symmetric continuum Widom--Rowlinson models}.
\newblock {\em Z. Wahrscheinlichkeitsth. und Verw. Geb.}, 67(2):121--138, 1984.

\bibitem[BKL98]{bricmont1998renormalization}
J.~Bricmont, A.~Kupiainen, and R.~Lefevere.
\newblock Renormalization group pathologies and the definition of {G}ibbs states.
\newblock {\em Comm. Math. Phys.}, 194:359--388, 1998.

\bibitem[CCK95]{chayes1995analysis}
J.T. Chayes, L.~Chayes, and R.~Koteck{\`y}.
\newblock {The analysis of the Widom--Rowlinson model by stochastic geometric methods}.
\newblock {\em Comm. Math. Phys.}, 172(3):551--569, 1995.

\bibitem[Der19]{dereudre2019introduction}
D.~Dereudre.
\newblock Introduction to the theory of {G}ibbs point processes.
\newblock In {\em Stochastic Geometry}, pages 181--229. Springer, 2019.

\bibitem[DH19]{dereudre2019phase}
D.~Dereudre and P.~Houdebert.
\newblock {Phase transition for continuum Widom--Rowlinson model with random radii}.
\newblock {\em J. Stat. Phys.}, 174:56--76, 2019.

\bibitem[EFHR02]{van2002possible}
A.~van Enter, R.~Fern{\'a}ndez, F.~den Hollander, and F.~Redig.
\newblock Possible loss and recovery of gibbsianness during the stochastic evolution of gibbs measures.
\newblock {\em Comm. Math. Phys.}, 226:101--130, 2002.

\bibitem[EMSS00]{grising_2000}
A.~van Enter, C.~Maes, R.H. Schonmann, and S.~Shlosman.
\newblock The {G}riffiths singularity random field.
\newblock {\em On Dobrushin's Way. From Probability Theory to Statistical Physics}, pages 51--58, 2000.

\bibitem[FPT19]{flint2019functional}
I.~Flint, N.~Privault, and G.~L. Torrisi.
\newblock {Functional inequalities for marked point processes}.
\newblock {\em Electron. J. Probab.}, 24:1--40, 2019.

\bibitem[GH96]{georgii1996phase}
H.-O. Georgii and O.~H{\"a}ggstr{\"o}m.
\newblock Phase transition in continuum {P}otts models.
\newblock {\em Comm. Math. Phys.}, 181(2):507--528, 1996.

\bibitem[Gou08]{Gouere08}
J.-B. Gou{\'e}r{\'e}.
\newblock {Subcritical regimes in the Poisson Boolean model of continuum percolation}.
\newblock {\em Ann. Probab.}, 36(4):1209--1220, 2008.

\bibitem[Gri69]{griffiths1969nonanalytic}
R.~Griffiths.
\newblock Nonanalytic behavior above the critical point in a random {I}sing ferromagnet.
\newblock {\em Phys. Rev. Lett.}, 23(1):17, 1969.

\bibitem[Gri99]{grimmett1999percolation}
G.~Grimmett.
\newblock {\em Percolation}.
\newblock Die Grundlehren der Mathematischen Wissenschaften. Springer, 1999.

\bibitem[GY05]{georgii2005conditional}
H.-O. Georgii and H.J. Yoo.
\newblock Conditional intensity and {G}ibbsianness of determinantal point processes.
\newblock {\em J. Stat. Phys.}, 118(1):55--84, 2005.

\bibitem[HJKP19]{den2019widom}
F.~den Hollander, S.~Jansen, R.~Koteck{\'y}, and E.~Pulvirenti.
\newblock {The Widom--Rowlinson model: Mesoscopic fluctuations for the critical droplet}.
\newblock {\em arXiv preprint arXiv:1907.00453}, 2019.

\bibitem[HT04]{higuchi_2004}
Y.~Higuchi and M.~Takei.
\newblock Some results on the phase structure of the two-dimensional {W}idom--{R}owlinson model.
\newblock {\em Osaka J. Math.}, 41(1):237--255, 2004.

\bibitem[JK17]{JK17}
B.~Jahnel and C.~K{\"u}lske.
\newblock The {W}idom--{R}owlinson model under spin flip: Immediate loss and sharp recovery of quasilocality.
\newblock {\em Ann. Appl. Probab.}, 27(6):3845--3892, 2017.

\bibitem[JK21]{jahnel2021gibbsian}
B.~Jahnel and C.~K{\"u}lske.
\newblock Gibbsian representation for point processes via hyperedge potentials.
\newblock {\em J. Theor. Probab.}, 34(1):391--417, 2021.

\bibitem[JK23]{JaKo22}
B.~Jahnel and J.~K{\"o}ppl.
\newblock Dynamical {G}ibbs variational principles for irreversible interacting particle systems with applications to attractor properties.
\newblock {\em Ann. Appl. Probab.}, 33(6A):4570--4607, 2023.

\bibitem[JK25]{jahnel2025phase}
Benedikt Jahnel and Daniel Kamecke.
\newblock Phase transitions for the widom--rowlinson model in random environments.
\newblock {\em arXiv preprint arXiv:2505.05319}, 2025.

\bibitem[KK18]{kissel2018dynamical}
S.~Kissel and C.~K{\"u}lske.
\newblock Dynamical {G}ibbs--non-{G}ibbs transitions in {C}urie--{W}eiss {W}idom--{R}owlinson models.
\newblock {\em Markov Process. Relat. Fields}, 25(3):379--413, 2018.

\bibitem[KKR19]{kissel2019hard}
S.~Kissel, C.~K\"ulske, and U.~Rozikov.
\newblock {Hard-core and soft-core Widom--Rowlinson models on Cayley trees}.
\newblock {\em J. Stat. Mech.: Th. Exp.}, 2019(4):043204, 2019.

\bibitem[KNR04]{KNR04}
C.~K{\"u}lske, A.~Le Ny, and F.~Redig.
\newblock {Relative entropy and variational properties of generalized Gibbsian measures}.
\newblock {\em Ann. Probab.}, 32(2):1691--1726, 2004.

\bibitem[Koz74]{kozlov1974gibbs}
O.K. Kozlov.
\newblock Gibbs description of a system of random variables.
\newblock {\em Probl. Pereda. Inf.}, 10(3):94--103, 1974.

\bibitem[K{\"u}l19]{kulske2019gibbs}
C.~K{\"u}lske.
\newblock {Gibbs--non-Gibbs transitions in different geometries: The Widom--Rowlinson model under stochastic spin-flip dynamics}.
\newblock In {\em Statistical Mechanics of Classical and Disordered Systems: Luminy}, pages 3--19. Springer, 2019.

\bibitem[Lig85]{liggett1985interacting}
T.M. Liggett.
\newblock {\em Interacting Particle Systems}, volume~2.
\newblock Springer, 1985.

\bibitem[MR96]{meester1996continuum}
R.~Meester and R.~Roy.
\newblock {\em Continuum Percolation}, volume 119.
\newblock Cambridge University Press, 1996.

\bibitem[Rue71]{ruelle1971existence}
D.~Ruelle.
\newblock Existence of a phase transition in a continuous classical system.
\newblock {\em Phys. Rev. Lett.}, 27(16):1040, 1971.

\bibitem[Sok81]{sokal1981existence}
A.D. Sokal.
\newblock Existence of compatible families of proper regular conditional probabilities.
\newblock {\em Z. Wahrscheinlichkeitsth. und Verw. Geb.}, 56(4):537--548, 1981.

\bibitem[Sul73]{sullivan1973potentials}
W.~Sullivan.
\newblock Potentials for almost {M}arkovian random fields.
\newblock {\em Comm. Math. Phys.}, 33(1):61--74, 1973.

\bibitem[WR70]{WR70}
B.~Widom and J.S. Rowlinson.
\newblock New model for the study of liquid-vapor phase transitions.
\newblock {\em J. Chem. Phys.}, 52(4):1670--1684, 1970.

\end{thebibliography}

\end{document}